\newcommand{\Morse}{{\mathsf M}{\mathsf o}{\mathsf r}{\mathsf s}{\mathsf e}}
\theoremstyle{plain}
\newtheorem{theorem}{Theorem}[section]
\newtheorem{lemma}[theorem]{Lemma}
\newtheorem{proposition}[theorem]{Proposition}
\theoremstyle{definition}
\newtheorem{definition}[theorem]{Definition}
\theoremstyle{remark}
\newtheorem{remark}[theorem]{Remark}
\newcommand{\m}{\mathfrak{m}}
\newcommand{\reals}{\mathbb{R}}
\newcommand{\Z}{\mathbb{Z}}
\newcommand{\rationals}{\mathbb{Q}}
\newcommand{\adm}{{\mathrm{a}}{\mathrm{d}}{\mathrm{m}}}
\newcommand{\abs}[1]{\left\lvert#1\right\rvert} %absolute value
\DeclareMathOperator{\SO}{SO}
\DeclareMathOperator{\Aut}{Aut}
\DeclareMathOperator{\Out}{Out}
\DeclareMathOperator{\Isom}{Isom}
\DeclareMathOperator{\Torr}{Torr}
\DeclareMathOperator{\cent}{center}
\DeclareMathOperator{\tor}{tor}
\DeclareMathOperator{\ind}{ind}
\newcommand{\forget}[1]{}
\global\let\c@equation=\c@theorem}
\newcommand{\Diff}{{\mathrm D}{\mathrm i}{\mathrm f}{\mathrm f}}
\newcommand{\Ver}{{\mathcal V}\!{\mathit e}{\mathit r}{\mathit t}}
\def\Riem{{\mathcal R}{\mathrm i}{\mathrm e}{\mathrm m}}
\def\p{\partial}
\def\Q{\rationals}
\def\R{\reals}
\def\Cr{{\mathrm Cr}}
\begin{document}
\pagestyle{myheadings} 
\markboth{Boris Botvinnik, Bernhard Hanke, Thomas Schick and
Mark Walsh}{Homotopy groups of moduli spaces of psc-metrics}

\date{Last compiled \today}%; last edited  \heuteIst or later}

\title[Homotopy groups of moduli spaces of psc-metrics]
{Homotopy groups of the moduli space of \\ metrics of positive
scalar curvature}

\author{Boris Botvinnik}
\address{Department of Mathematics \\
  University of Oregon, Eugene, OR, 97403 USA} 
% \thanks{{Boris
%     Botvinnik} was partly supported by SFB-748 (Geometrische
%   Strukturen in der Mathematik, M\"unster, Germany) and by IH\'ES,
%   Bures-sur-Yvette, France. {Thomas Schick was partly supported by the
%     Courant Research Center ``Higher order structures in Mathematics''
%     within the German initiative of excellence. }\textcolor{red}{Mark
%     Walsh was partly supported by SFB-748 (Geometrische Strukturen in
%     der Mathematik, M\"unster, Germany)}}

\author{Bernhard Hanke}
\address{Zentrum Mathematik, Technische Universit\"at M\"unchen, Boltzmannstr.~3, 85748 Garching bei M\"unchen, Germany}

\author{Thomas
  Schick}
\address{Mathematisches Institut, Georg-August-Universit\"at G{\"o}ttingen\\
  Bunsenstr.~3, 37073 G\"ottingen, Germany}

\author{Mark Walsh}
\address{Department of Mathematics \\ 
University of Oregon, Eugene, OR, 97403
USA}

\maketitle
\vspace*{-5mm}

\begin{abstract}
  We show by explicit examples that in many degrees in a stable range the 
  homotopy groups of the moduli spaces of Riemannian metrics of positive
  scalar curvature on closed smooth manifolds  
  can be non-trivial. This is achieved by further
  developing and then applying a family version of the surgery
  construction of Gromov-Lawson to certain nonlinear smooth sphere bundles
  constructed 
  by Hatcher.

 As described, this works for all manifolds of suitable dimension and for the
quotient of the space of metrics of positive scalar curvature by the (free)
action of the subgroup of diffeomorphisms which fix a point and its tangent
space.

  We also construct special manifolds with positive scalar curvature where the quotient of the space of
metrics of positive scalar curvature by the full diffeomorphism group has
non-trivial higher homotopy groups.

\end{abstract}

\section{Introduction}
\subsection{Motivation}
Let $M$ be a closed smooth manifold. In this article we study the
topology of the space of metrics of positive scalar curvature $\Riem^+(M)$
and of corresponding moduli spaces.  We abbreviate ``metric of positive
scalar curvature'' by ``psc-metric''.

It has been known for a long time that there are quite a few
obstructions to the existence of psc-metrics.  This starts in
dimension $2$, where the Gau\ss -Bonnet theorem tells us that only the
sphere and $\reals P^2$ admit such a metric. In general the
Lichnerowicz formula in combination with the Atiyah-Singer index theorem 
implies that if  $M$ is a spin manifold and admits a psc-metric, then the
$\hat{A}$-genus of $M$ is zero. {The Gromov-Lawson-Rosenberg
  conjecture \cite{Rosenberg(1983)} was an
attempt to completely characterize those spin manifolds admitting
psc-metrics. It was later disproved in \cite{Sch}.}

In spite of the complicated picture for general manifolds, the existence question has been resolved
completely for simply connected manifolds $M$ of dimension at least
five.  Gromov and Lawson proved in \cite{GL1} that if $M$ is not 
spin, then there is no obstruction and $M$ admits a psc-metric. Assuming that $M$ is spin, 
{Stolz \cite{Stolz} proved that the only obstruction is the  
$KO$-valued index of the Dirac operator on $M$}.

If $M$ admits a psc-metric, one can go on and investigate the topology of 
$\Riem^+(M)$, the space of psc-metrics on $M$ equipped with the 
the $C^\infty$-topology. Note that $\Diff(M)$, the diffeomorphism group of $M$, 
acts on $\Riem^+(M)$ via pull-back, and so it is even more natural to
study the moduli space $\Riem^+(M)/\Diff(M)$.

In the spin case index theoretic methods were used  to 
show that the spaces $ \Riem^+(M)$ and
$\Riem^+(M)/\Diff(M) $ have infinitely many components in many cases, 
see e.g. the work of  Gromov-Lawson \cite{GL3} or  Lawson-Michelsohn \cite{LM} or, for more
refined versions, the papers \cite{Botvinnik-Gilkey, Leichtnam-Piazza, Piazza-Schick}. If $M$ is
simply connected, this applies to the case when $\dim(M)\equiv 1\pmod
4$.

Hitchin observed in his thesis \cite[Theorem 4.7]{Hitchin} that
sometimes, {in the spin case},  non-zero elements in the homotopy groups of $\Diff(M)$ yield, via the
action of $\Diff(M)$ on $\Riem^+(M)$, non-zero elements in the homotopy groups of 
$\Riem^+(M)$. More precisely, he proves this way that
$\pi_0(\Riem^+(M^n))$ is non-trivial for $n\equiv -1,0,1\pmod 8$ and
$\pi_1(\Riem^+(M^n))$ is non-trivial for $n\equiv -1,0\pmod 8$.

Contrasting these positive results, it has been an open problem to decide whether  
$\pi_k(\Riem^+(M))$ for $k > 1$ or $\pi_k(\Riem^+(M)/ \Diff(M))$ for $k > 0$ can be non-trivial.  
Note that, by construction, Hitchin's elements in $\pi_k(\Riem^+(S^n))$, $k=0,1$,
are mapped to zero in the moduli space $\Riem^+(M)/\Diff(M)$. 
Some experts even raised the suspicion  that the components of this moduli space are always contractible.

\subsection{Moduli spaces of psc-metrics}

In this paper we will construct many examples of non-zero elements
in higher homotopy groups of moduli spaces of psc-metrics on closed smooth manifolds $M$.  We
denote by $\Riem(M)$ the space of all Riemannian metrics with the
$C^{\infty}$-topology. The group of diffeomorphisms $\Diff(M)$ 
acts from the right on the space $\Riem(M)$ by pull-back: $(g,\phi) \mapsto \phi^*(g)$. The orbit 
space of this action is the {\em moduli space of Riemannian metrics} on $M$ and written ${\mathcal M}(M)$. The orbit 
space ${\mathcal M}^+(M)$ of the restricted $\Diff(M)$-action on  the subspace $\Riem^+(M)$ of psc-metrics, 
the {\em moduli space of Riemannian metrics of positive scalar curvature} on $M$, is  our principal object of interest.  

In general the action of the full 
diffeomorphism group is not free on $\Riem(M)$: For example, if a finite group $G$ acts effectively on $M$ (i.e. 
if $G$ occurs as a finite subgroup of $\Diff(M)$), then any metric on $M$ can 
be averaged over $G$, and the resulting metric will be fixed by $G$. 
Therefore we also consider the \emph{moduli spaces with observer} as 
proposed by Akutagawa and Botvinnik \cite{Botvinnik-Akutagawa-1}.

\begin{definition}\label{def:observerDiff}
  Let $(M,x_0)$ be a connected closed smooth manifold with some
  basepoint $x_0$. Let $\Diff_{x_0}(M)$ be the subgroup of 
  $\Diff(M)$ of those diffeomorphisms which fix $x_{0}$ and induce the
  identity on the tangent space $T_{x_0}M$. This is  the group of diffeomorphisms
which \emph{preserve an observer based at $x_0$}.
\end{definition}

\begin{lemma}\label{lem:free_action} 
  If $(M,x_0)$ is a connected smooth closed manifold with a basepoint
  $x_0$ then $\Diff_{x_0}(M)$ acts freely on the space $\Riem(M)$ of
  Riemannian metrics on $M$.
\end{lemma}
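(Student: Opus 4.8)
The plan is to deduce freeness of the action from the classical rigidity principle that an isometry of a connected Riemannian manifold which fixes a point and whose differential at that point is the identity must be the identity map. Fix an arbitrary metric $g \in \Riem(M)$ and suppose $\phi \in \Diff_{x_0}(M)$ satisfies $\phi^* g = g$; I must show $\phi = \id_M$. The equation $\phi^* g = g$ says exactly that $\phi$ is an isometry of $(M,g)$, and by definition of $\Diff_{x_0}(M)$ we have $\phi(x_0) = x_0$ and $d\phi_{x_0} = \id_{T_{x_0}M}$. So the whole lemma reduces to the rigidity statement just quoted, applied with $p = x_0$.

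To prove that statement, first I would recall that isometries intertwine exponential maps: for every $p \in M$ and every $v \in T_pM$ for which $\exp_p(v)$ is defined,
\[
  \phi\bigl(\exp_p(v)\bigr) \;=\; \exp_{\phi(p)}\bigl(d\phi_p(v)\bigr).
\]
This holds because an isometry preserves the Levi-Civita connection (it pulls back the connection of $g$ to that of $\phi^* g = g$) and hence maps the geodesic with initial data $(p,v)$, affinely parametrized, to the geodesic with initial data $(\phi(p), d\phi_p(v))$.

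Next I would run a connectedness argument on the set
\[
  F \;=\; \{\, x \in M \;:\; \phi(x) = x \ \text{ and }\ d\phi_x = \id_{T_xM} \,\}.
\]
It contains $x_0$, so it is nonempty, and it is closed since $\phi$ and $d\phi$ are continuous (the condition $d\phi_x = \id$ being imposed on the already-closed fixed-point locus). It is open: given $x \in F$, pick a star-shaped neighbourhood $V$ of $0 \in T_xM$ on which $\exp_x$ is a diffeomorphism onto an open neighbourhood $U$ of $x$; for $v \in V$ the displayed identity gives $\phi(\exp_x v) = \exp_{\phi(x)}(d\phi_x v) = \exp_x v$, so $\phi|_U = \id_U$, whence $d\phi_y = \id_{T_yM}$ for all $y \in U$ and $U \subseteq F$. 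Since $M$ is connected, $F = M$, i.e. $\phi = \id_M$, and the action is free.

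I do not expect a genuine obstacle: this is the standard Myers--Steenrod-type rigidity argument, and the only points needing a little care are the elementary verification that $F$ is closed and, in the openness step, the remark that a diffeomorphism restricting to the identity on an open set automatically has identity differential there. Note that completeness (hence closedness) of $M$ is not actually needed, since only the local behaviour of $\exp_x$ near the origin is used; the hypothesis that $M$ is closed serves mainly to ensure $M$ has no boundary, so that these normal-coordinate arguments apply without modification at every point.
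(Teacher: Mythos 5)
Your proposal is correct, and the key reduction — that $\phi^*g = g$ forces $\phi$ to be an isometry fixing $x_0$ and $T_{x_0}M$, and that such an isometry must be the identity — is the same as in the paper. The mechanism by which you finish, however, differs. The paper uses the fact that $M$, being closed, is geodesically complete, so every point lies on some geodesic emanating from $x_0$; since an isometry carries geodesics to geodesics with matching initial conditions and $\phi$ fixes the initial data $(x_0, v)$ for every $v \in T_{x_0}M$, each such geodesic is fixed pointwise, and hence so is every point of $M$. You instead run a local open--closed connectedness argument on the locus $F = \{x : \phi(x) = x,\ d\phi_x = \id\}$, using only the naturality of $\exp$ near the origin and never invoking completeness. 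Your version is slightly more general (it applies verbatim to connected noncompact manifolds without boundary), and you are right to note that compactness is then only needed so that normal coordinates make sense everywhere, i.e.\ that $M$ has no boundary; the paper's version is marginally more direct once one accepts Hopf--Rinow. Both are standard forms of the Myers--Steenrod rigidity argument and both are fully valid here.
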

\begin{proof} {This lemma is well known, compare
    e.g.~\cite[Proposition IV.5]{Bourg}. For convenience we 
    recall the proof.}
  Assume $g$ is a Riemannian metric on $M$, $\phi\in \Diff_{x_0}(M)$
  and $\phi^*g=g$. This means that the map $\phi$ is an isometry of
  $(M,g)$. As $x_0$ and $T_{x_0}M$ are fixed by $\phi$, so are all
  geodesics emanating from $x_0$ (pointwise). Since $M$ is closed and
  connected, every point lies on such a geodesic, so $\phi$ is the
  identity.
\end{proof}

In the following we equip $\Diff(M)$ and $\Diff_{x_0}(M)$ with the $C^{\infty}$-topologies. 
Let ${\mathcal M}_{x_0}(M)=\Riem(M)/\Diff_{x_0}(M)$. We call 
${\mathcal M}_{x_0}(M)$ the \emph{observer moduli space of
  Riemannian metrics on $M$.} Since the space $\Riem(M)$ is
contractible and the action of $\Diff_{x_0}(M)$ on $\Riem(M)$ is
\emph{proper} (see \cite{Ebin}), Lemma \ref{lem:free_action} implies that
the orbit space ${\mathcal M}_{x_0}(M)$ is homotopy equivalent to the
classifying space $B\Diff_{x_0}(M)$ of the group $\Diff_{x_0}(M)$. 
In particular one obtains a $\Diff_{x_0}(M)$-principal fiber bundle
\begin{equation}\label{eq:moduli_space}
    \Diff_{x_0}(M)\to \Riem(M) \to {\mathcal M}_{x_0}(M). 
\end{equation}
This yields isomorphisms of homotopy groups
$$
\pi_{q}{\mathcal M}_{x_0}(M)= \pi_{q} B\Diff_{x_0}(M)\cong 
\pi_{q-1}\Diff_{x_0}(M), \ \ q\geq 1.
$$
Now we restrict the action of $\Diff_{x_0}(M)$ to the subspace 
$\Riem^+(M)$ of psc-metrics. Clearly this
action is free as well. We call the orbit space 
$$
{\mathcal M}^+_{x_0}(M):= \Riem^+(M)/\Diff_{x_0}(M)
$$ 
the \emph{observer moduli space of psc-metrics}.  Again we obtain a
$\Diff_{x_0}(M)$-principal fiber bundle
\begin{equation}\label{eq:Hitchins_sequence}
    \Diff_{x_0}(M)\to \Riem^+(M) \to  {\mathcal M}^+_{x_0}(M) \ .
\end{equation}
The inclusion $\Riem^+(M)\hookrightarrow \Riem(M)$
induces inclusions of moduli spaces ${\mathcal M} ^+(M) \hookrightarrow  {\mathcal M}(M)$ and 
${\mathcal M}^+_{x_0}(M) \hookrightarrow  {\mathcal M}_{x_0}(M)$. We collect our 
observations in the following lemma.

\begin{lemma} Let $M$ be a connected closed manifold and $x_0\in M$. Then
\begin{enumerate}
\item%[{\bf (a)}]
 there is the  following commutative diagram of principal
$\Diff_{x_0}(M)$-fibrations 
\begin{equation}\label{eq:conf1}
\begin{diagram}
\setlength{\dgARROWLENGTH}{1.2em}
\node{\Riem^+(M)}
\arrow{s}
\arrow[2]{e,t}{}
\node[2]{\Riem(M)}
\arrow{s}
\\
\node{{\mathcal M}_{x_0}^+(M)}
\arrow[2]{e,t}{}
\node[2]{{\mathcal M}_{x_0}(M)}
\end{diagram}
\end{equation}
\item%[{\bf (b)}] 
the observer moduli space ${{\mathcal M}_{x_0}(M)}$
  of Riemannian metrics on $M$ is
homotopy equivalent to the classifying space $B\Diff_{x_0}(M)$;
\item%[{\bf (c)}]
 there is a homotopy fibration
\begin{equation}\label{eq:homotopy_sequence}
    \Riem^+(M)\to {\mathcal M}_{x_0}^+(M)\to {\mathcal M}_{x_0}(M) .
\end{equation}
\end{enumerate}
\end{lemma}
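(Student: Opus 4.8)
The plan is to deduce all three items from the two principal bundles \eqref{eq:moduli_space} and \eqref{eq:Hitchins_sequence} together with standard facts already recalled in the excerpt. The commutative square \eqref{eq:conf1} in item (1) is essentially a bookkeeping matter: both columns are the quotient maps by the same group $\Diff_{x_0}(M)$, which acts freely on $\Riem(M)$ by Lemma \ref{lem:free_action} and hence also on the invariant subspace $\Riem^+(M)$; the horizontal maps are the inclusion $\Riem^+(M)\hookrightarrow\Riem(M)$ and the induced map on orbit spaces. Commutativity is immediate since the inclusion is $\Diff_{x_0}(M)$-equivariant, and both maps are principal $\Diff_{x_0}(M)$-bundles because the action is free and proper (by \cite{Ebin}, as already noted), so local slices exist. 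I would state this and move on.

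For item (2) I would simply repeat the argument sketched in the excerpt: $\Riem(M)$ is convex, hence contractible; the free proper action of $\Diff_{x_0}(M)$ makes \eqref{eq:moduli_space} a principal bundle with contractible total space; therefore the base ${\mathcal M}_{x_0}(M)$ is a model for $B\Diff_{x_0}(M)$ (equivalently, the long exact homotopy sequence of \eqref{eq:moduli_space} forces $\pi_q{\mathcal M}_{x_0}(M)\cong\pi_{q-1}\Diff_{x_0}(M)$, the defining property of the classifying space, and one can invoke the uniqueness of classifying spaces up to homotopy).

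For item (3), the homotopy fibration \eqref{eq:homotopy_sequence}, my approach is to compare the two principal $\Diff_{x_0}(M)$-bundles \eqref{eq:Hitchins_sequence} and \eqref{eq:moduli_space} via the map of bundles given by \eqref{eq:conf1}. A map of principal $G$-bundles covering $\Riem^+(M)\hookrightarrow\Riem(M)$ classifies, up to homotopy, the same map of base spaces; concretely, since $\Riem(M)$ is contractible, the map ${\mathcal M}_{x_0}^+(M)\to{\mathcal M}_{x_0}(M)$ is the composite of ${\mathcal M}_{x_0}^+(M)\simeq\Riem^+(M)/\Diff_{x_0}(M)\to B\Diff_{x_0}(M)\simeq{\mathcal M}_{x_0}(M)$, the Borel-construction map classifying the $\Diff_{x_0}(M)$-bundle \eqref{eq:Hitchins_sequence}. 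The homotopy fiber of a classifying map $X/G\to BG$ for a free (proper) $G$-action is exactly $X$ itself; applying this with $X=\Riem^+(M)$ gives \eqref{eq:homotopy_sequence}. Alternatively, and perhaps more cleanly for the write-up, one observes directly that \eqref{eq:conf1} is a pullback square of fibrations: ${\mathcal M}_{x_0}^+(M)$ is the pullback of $\Riem(M)\to{\mathcal M}_{x_0}(M)$ along ${\mathcal M}_{x_0}^+(M)\to{\mathcal M}_{x_0}(M)$, because both are $\Riem^+(M)/\Diff_{x_0}(M)$; then the fiber inclusion of the right-hand fibration pulls back to the fiber inclusion of the left-hand one, i.e. the fiber of ${\mathcal M}_{x_0}^+(M)\to{\mathcal M}_{x_0}(M)$ over the basepoint is $\Riem^+(M)$, which is the assertion.

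The only genuinely delicate point is the foundational one lurking behind all three items: that these infinite-dimensional quotients by $\Diff_{x_0}(M)$ really do behave like honest principal bundles (existence of slices, the quotient map being a fibration). This rests on Ebin's slice theorem and the properness of the action, which the excerpt has already invoked, so in the write-up I would cite \cite{Ebin} and \cite{Bourg} and treat it as established rather than reprove it; everything else is then a formal consequence of the long exact sequence of a fibration and the universal property of $BG$.
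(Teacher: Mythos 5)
Your proposal is correct and follows the same route the paper takes; indeed, the paper offers no explicit proof of this lemma but states it as a collection of "observations" drawn from the preceding paragraphs, namely: freeness of the $\Diff_{x_0}(M)$-action (Lemma \ref{lem:free_action}), properness via Ebin's slice theorem, contractibility of $\Riem(M)$, and the resulting principal bundles \eqref{eq:moduli_space} and \eqref{eq:Hitchins_sequence}. The one place you add genuine content beyond what the paper spells out is item (3), where you correctly identify the square \eqref{eq:conf1} as a strict pullback and note that, since $\Riem(M) \to \mathcal{M}_{x_0}(M)$ is a fibration with contractible total space, the strict fiber $\Riem^+(M)$ of that pullback computes the homotopy fiber of $\mathcal{M}_{x_0}^+(M) \to \mathcal{M}_{x_0}(M)$; this is the argument the paper leaves implicit, and your version (either via the Borel-construction classifying map or via the pullback observation) is a clean way to make it precise.
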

The constructions of Hitchin \cite{Hitchin} use certain non-zero
elements in $\pi_k\Diff(M)$ and push them forward to the space
$\Riem^+(M)$ via the first map in \eqref{eq:Hitchins_sequence}. It is then shown that 
these elements are non-zero in  $\pi_k\Riem^+(M)$ (for $k=0,1$). 

Our main method will be similar, but starting from the fiber
sequence \eqref{eq:homotopy_sequence}. We will show that certain
non-zero elements of $\pi_k B\Diff_{x_0}(M)=\pi_k{\mathcal
  M}_{x_0}(M)$ can be lifted to ${\mathcal M}^+_{x_0}(M)$. Once such 
lifts have been constructed, it is immediate that they represent 
non-zero elements in $\pi_k{\mathcal M}^+_{x_0}(M)$ as their images are
non-zero in $\pi_k {\mathcal M}_{x_0}(M)$. 

\subsection{The results}
We start from the particular manifold $M=S^n$. Let $x_0\in S^n$ be a base point. Then
the group $\Diff_{x_0}(S^n)$ is homotopy equivalent to the group
$\Diff(D^n, \p D^n)$ of diffeomorphisms of the disk $D^n$ which
restrict to the identity on the boundary $\p D^n$. These groups and
their classifying spaces have been studied extensively.  In particular the
rational homotopy groups $\pi_q B\Diff_{x_0}(S^n)\otimes \Q$ are known
from algebraic $K$-theory computations and Waldhausen $K$-theory in a stable range.

\begin{theorem}\label{thm:Farrell-Hsiang}
{\rm (Farrell and Hsiang \cite{FH})} Let $0<k\ll n$. Then
$$
\pi_k B\Diff_{x_0}(S^n)\otimes \Q =
\left\{
\begin{array}{cl}
\Q  & \mbox{if} \ n \ \mbox{odd}, \ k=4q,
\\
0 & \mbox{else}.
\end{array}
\right.
$$
\end{theorem}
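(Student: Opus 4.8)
The plan is to reduce the computation of $\pi_k B\Diff_{x_0}(S^n)\otimes\Q = \pi_{k-1}\Diff(D^n,\partial D^n)\otimes\Q$ (for $0<k\ll n$) to the rational homotopy of Waldhausen's algebraic $K$-theory of a point, which in turn is governed by the rationalized $K$-theory of $\Z$. First I would invoke the stability theorem and the standard identification $\Diff_{x_0}(S^n)\simeq \Diff(D^n,\partial D^n)$ already recorded in the excerpt, so that it suffices to compute $\pi_{k-1}\Diff(D^n,\partial D^n)\otimes\Q$ in the concordance-stable range. Next I would use Morlet's comparison between the block diffeomorphism space and the honest diffeomorphism space, expressing the difference via concordance (pseudoisotopy) spaces, whose stable value is $\mathcal{P}(*)$; then Igusa's stability theorem says this is valid in a range $k \ll n$. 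The block diffeomorphism space $\widetilde{\Diff}(D^n,\partial D^n)$ is contractible (Alexander trick-style argument for blocks), so the loop space of the smoothing/concordance fibration identifies $\pi_{k-1}\Diff(D^n,\partial D^n)$ rationally with homotopy of the stable pseudoisotopy space $\mathcal{P}(*)$, shifted by appropriate degrees.

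From there the key input is Waldhausen's theorem relating the stable smooth pseudoisotopy space to $A(*)$, the algebraic $K$-theory of the one-point space: there is a splitting $A(*) \simeq S \times \mathrm{Wh}^{\mathrm{Diff}}(*)$, with $\mathrm{Wh}^{\mathrm{Diff}}(*)$ a double delooping of the stable smooth pseudoisotopy space, so $\pi_i \mathcal{P}(*) \cong \pi_{i+2}\mathrm{Wh}^{\mathrm{Diff}}(*)$. Rationally the sphere spectrum contributes nothing in positive degrees, and $A(*)\otimes\Q \simeq K(\Z)\otimes\Q$ by the linearization/trace comparison; by Borel's computation $\pi_j K(\Z)\otimes\Q = \Q$ for $j\equiv 1 \pmod 4$, $j\geq 5$, and $0$ otherwise in positive degrees. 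Tracking the two deloopings and the bookkeeping that converts pseudoisotopy degrees to diffeomorphism-group degrees (a shift by $1$ from $B\Diff$ to $\Diff$, and the loop relation $\Diff \simeq \Omega(\text{concordance})$ in the stable range), one finds a nonzero rational group exactly when $k = 4q$; the parity constraint "$n$ odd" enters because the involution on pseudoisotopy spaces coming from turning a cylinder upside down acts on the rational homotopy by a sign depending on the parity of $n$, and one must pass to the $(-1)^{n}$-eigenspace, which is all of $\Q$ when $n$ is odd and $0$ when $n$ is even.

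I would organize the write-up as: (i) identify $\Diff_{x_0}(S^n)\simeq \Diff(D^n,\partial D^n)$ and reduce to a stable range; (ii) the Morlet/Burghelea--Lashof equivalence $\Diff(D^n,\partial D^n) \simeq \Omega^{n+1}\left(\mathrm{PL}_n/O_n\right)$-type statement, or more directly the fibration comparing $\Diff$ with block diffeomorphisms and the contractibility of the latter, yielding $\Diff(D^n,\partial D^n)\simeq \Omega \overline{\mathcal C}(D^n)$ with $\overline{\mathcal C}$ the concordance space; (iii) Igusa stability to replace $\overline{\mathcal C}(D^n)$ by the stable $\mathcal P(*)$ in the range $k \ll n$; (iv) Waldhausen's identification $\mathcal P(*) \simeq \Omega^2 \mathrm{Wh}^{\mathrm{Diff}}(*)$ and the splitting of $A(*)$; (v) rationalization $A(*)\otimes\Q\simeq K(\Z)\otimes\Q$ and Borel's theorem; (vi) the involution argument producing the parity condition on $n$. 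I expect the main obstacle to be step (vi) together with the precise degree bookkeeping in steps (ii)--(iv): getting the shifts right so that the $K(\Z)\otimes\Q$ classes in degrees $4q+1$ land as the classes in $\pi_{4q}B\Diff_{x_0}(S^n)$, and correctly identifying how the canonical involution acts on these rational classes to kill them when $n$ is even. Since this is exactly the content of Farrell--Hsiang \cite{FH}, for the paper it suffices to cite their result and summarize this chain of reductions rather than reprove it in detail.
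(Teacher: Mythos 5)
The paper offers no proof of this theorem: it is quoted verbatim from Farrell--Hsiang and used as a black box, with the text immediately before the statement only remarking that the groups are known ``from algebraic $K$-theory computations and Waldhausen $K$-theory in a stable range.'' So there is no in-paper argument to compare against; your proposal should be judged as a summary of the Farrell--Hsiang proof, and your own closing remark --- that for the purposes of this paper a citation suffices --- is exactly what the authors do.

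As a road map of the Farrell--Hsiang computation in its modern formulation (block diffeomorphisms and the Alexander trick, concordance stability, Waldhausen's $\mathcal{P}(*)\simeq\Omega^{2}\mathrm{Wh}^{\mathrm{Diff}}(*)$ and $A(*)\simeq S\times \mathrm{Wh}^{\mathrm{Diff}}(*)$, the rational equivalence $A(*)\to K(\Z)$, Borel's computation $K_{4q+1}(\Z)\otimes\Q=\Q$ for $q\geq 1$, and the $C_2$-involution on pseudoisotopy producing the parity restriction on $n$), your sketch is essentially correct and identifies the right inputs. Two small cautions. First, the slogan $\Diff(D^{n},\partial D^{n})\simeq\Omega\,\overline{\mathcal{C}}(D^{n})$ is not a literal equivalence; the comparison runs through the tower of fibrations $\Diff(D^{j+1},\partial)\to\mathcal{C}(D^{j})\to\Diff(D^{j},\partial)$ together with the upside-down involution, and it is precisely in assembling this tower that the $(-1)^{n}$ sign appears --- you flag this yourself as the delicate bookkeeping. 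Second, you invoke Igusa's concordance stability theorem, which postdates Farrell--Hsiang; their 1978 paper used a weaker (but sufficient) stability range. Neither point affects the conclusion, and the net degree count ($K_{4q+1}(\Z)\otimes\Q \leadsto \pi_{4q-1}\mathcal{P}(*)\otimes\Q \leadsto \pi_{4q-1}\Diff(D^{n},\partial)\otimes\Q \leadsto \pi_{4q}B\Diff_{x_{0}}(S^{n})\otimes\Q$, with $n$ odd) comes out right. For the paper as written, no more than the citation to \cite{FH} is needed.
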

Here and in later places the shorthand notation $k \ll n$ means that for fixed $k$ there is an $N \in \mathbb{N}$ so 
that the statement is true for all $n \geq N$. 

Consider the inclusion map
$\iota: {\mathcal M}_{x_0}^+(S^n)\to
{\mathcal M}_{x_0}(S^n) = B\Diff_{x_0}(S^n)
$ 
and the corresponding homomorphism of homotopy groups:
$$ 
\iota_*: \pi_k {\mathcal M}_{x_0}^+(S^n) \to
\pi_k{\mathcal M}_{x_0}(S^n).
$$
Here is our first main result.
\begin{theorem}\label{thm:posit}
The homomorphism
$$
\iota_*\otimes\Q: \pi_k{\mathcal M}_{x_0}^+(S^n)\otimes \Q\to
\pi_k{\mathcal M}_{x_0}(S^n) \otimes \Q
$$
is an epimorphism for $0< k \ll  n $. In particular, the
groups 
$\pi_k{\mathcal M}_{x_0}^+(S^n)$ are non-trivial for odd $n$ and
$0<k=4q\ll n$. 
\end{theorem}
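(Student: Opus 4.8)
The plan is to prove that $\iota_*\otimes\Q$ is surjective by constructing, for each generator of $\pi_k{\mathcal M}_{x_0}(S^n)\otimes\Q$, an explicit lift to $\pi_k{\mathcal M}_{x_0}^+(S^n)\otimes\Q$; surjectivity then follows, and the ``in particular'' clause is immediate from Theorem~\ref{thm:Farrell-Hsiang} together with the fact that $\pi_k{\mathcal M}_{x_0}(S^n)\otimes\Q=\pi_kB\Diff_{x_0}(S^n)\otimes\Q\neq 0$ for odd $n$ and $k=4q$.

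Here is how I would build such lifts. By the discussion following~\eqref{eq:homotopy_sequence}, a class in $\pi_k{\mathcal M}_{x_0}(S^n)=\pi_kB\Diff_{x_0}(S^n)$ is represented by a smooth fiber bundle $E\to S^k$ with fiber $S^n$, structure group $\Diff_{x_0}(S^n)$ (equivalently, after the homotopy equivalence $\Diff_{x_0}(S^n)\simeq\Diff(D^n,\partial D^n)$, a bundle with a distinguished section together with a fiberwise trivialization of the vertical tangent bundle along that section). Farrell and Hsiang's classes are detected rationally, and the concrete bundles realizing them are the ones built by Hatcher out of the canonical constructions in pseudoisotopy theory; these are precisely the ``nonlinear smooth sphere bundles constructed by Hatcher'' advertised in the abstract. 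The key step is then to equip the total space of such a bundle with a fiberwise psc-metric: one starts from the round metric on the fiber near the observer disk (where the bundle is trivialized) and applies a \emph{family version} of the Gromov--Lawson surgery construction to spread a psc-metric across all fibers simultaneously. Since each fiber is $S^n$ and the bundle is, away from a disk, assembled by gluing in families of diffeomorphisms supported in $D^n$, one needs a parametrized surgery/isotopy argument showing that the Gromov--Lawson construction can be performed continuously over the compact base $S^k$. This produces a section of the bundle~\eqref{eq:homotopy_sequence} over (a map from) $S^k$, i.e. a class in $\pi_k{\mathcal M}_{x_0}^+(S^n)$ mapping to the given class in $\pi_k{\mathcal M}_{x_0}(S^n)$.

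Because we only need surjectivity after $\otimes\Q$, there is some slack: it suffices to lift a set of classes that generate the rational homotopy group, and one may pass to multiples if that helps the surgery construction close up. For the final assertion, once $\iota_*\otimes\Q$ is onto and $\pi_k{\mathcal M}_{x_0}(S^n)\otimes\Q\cong\Q\neq 0$ for odd $n$, $k=4q\ll n$ by Theorem~\ref{thm:Farrell-Hsiang}, any rational generator has a preimage, which is necessarily a class of infinite order in $\pi_k{\mathcal M}_{x_0}^+(S^n)$; hence that group is non-trivial (indeed infinite).

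The main obstacle will be the family version of the Gromov--Lawson construction: carrying out surgery and the attendant metric deformations \emph{continuously in a parameter} over the base $S^k$, keeping scalar curvature positive uniformly, and matching the construction with the particular gluing description of Hatcher's bundles. One must check that all the local moves (the Gromov--Lawson connected-sum/tubular-neighborhood deformations, and the standardization of the metric near the observer disk where the structure group has been reduced) can be chosen to depend smoothly on the base point and to be compatible on overlaps. This is exactly the technical heart of the paper — developing the parametrized surgery theorem for psc-metrics — and everything else (identifying the bundles, invoking Farrell--Hsiang, reading off non-triviality from the fibration~\eqref{eq:homotopy_sequence}) is comparatively formal.
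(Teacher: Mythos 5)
Your plan follows the paper's approach exactly: represent the Farrell--Hsiang generators by Hatcher's nonlinear sphere bundles, equip these with fiberwise psc-metrics via a parametrized Gromov--Lawson surgery construction, and read off the lift through the fibration \eqref{eq:homotopy_sequence}. The one input you leave implicit is \emph{which} parametrized structure makes the family surgery run: the paper relies on Goette's description of Hatcher's $D^n$-bundle as carrying an admissible family of Morse functions with fold singularities in the sense of Definition \ref{folds}; it then doubles along the boundary to obtain the $S^n$-bundle, applies Theorem \ref{theo:construct_metrics-1a} on each half, and caps off with torpedo metrics, rather than ``spreading'' a metric outward from the observer disk.
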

Theorem \ref{thm:Farrell-Hsiang} is essentially an existence theorem
and does not directly lead to a geometric interpretation of the
generators of $\pi_{k}B\Diff_{x_0}(S^n) \otimes \Q$.  This was
achieved later in the work of B\"okstedt \cite{Bok} and Igusa
\cite{Igusa-book, Igusa-new} based on a construction of certain smooth
nonlinear disk and sphere bundles over $S^{k}$ due to Hatcher.  The
non-triviality of some of these bundles is detected by the
non-vanishing of a higher Franz-Reidemeister torsion invariant.

Recall from \cite{Igusa-book, Igusa, Igusa-new} that for any closed
smooth manifold $M$ there are universal higher Franz-Reidemeister
torsion classes $\tau_{2q} \in H^{4q}(B\Torr(M); \Q)$, where $\Torr(M)
\subset \Diff(M)$ is the subgroup of diffeomorphisms of $M$ that act
trivially on $H_*(M; \Q)$. Note that $\Diff_{x_0}(S^n)\subset
\Torr(S^n)$. {Furthermore, it is well-known that
  $\Torr(S^n)$ is the subgroup of $\Diff(S^n)$} consisting of
orientation preserving diffeomorphisms.  In particular, these classes
define characteristic classes for smooth fiber bundles $M \to E \to B$
over path connected closed smooth manifolds $B$ with $\pi_1(B)$ acting
trivially on $H_*(M;\Q)$. (The last condition can be weakened to
$H_*(M;\Q)$ {being} a unipotent $\pi_1(M)$-module \cite{Igusa-new},
but this is not needed here).

The relevant class $\tau_{2q} \in H^{4q}(S^{4q};\Q)$ of the Hatcher
bundles over $S^{4q}$ with fiber $S^n$ was computed in \cite{Goette,
  Igusa-book, Igusa-new} and shown to be non-zero, if $n$ is odd.  The
generators of $\pi_k B \Diff_{x_0}(S^n)$ appearing in Theorem
\ref{thm:Farrell-Hsiang} can be represented by classifying maps $S^k
\to B\Diff_{x_0}(S^n)$ of these Hatcher bundles in this way.  In order
to prove Theorem \ref{thm:posit} we construct families of psc-metrics
on these bundles.

Therefore, in Section \ref{sec:surgery_in_families}, we will first
study how and under which conditions such constructions can be carried
out. Assuming that a given smooth bundle admits a fiberwise Morse
function, we use the surgery technique developed by Walsh
\cite{Walsh}, which generalizes the Gromov-Lawson construction of
psc-metrics via handle decompositions \cite{Ga, GL1} to families of
Morse functions, in order to construct families of psc-metrics on this
bundle, see Theorem \ref{theo:construct_metrics-1a}. This is the
technical heart of the paper at hand.  Compared to \cite{Walsh} the
novel point is the generalization of the relevant steps of this
construction to nontrivial fiber bundles.

Then, we will study particular generators of $\pi_k
B\Diff_{x_0}(S^n)\otimes \rationals$ for suitable $k$ and $n$, as in
Theorem \ref{thm:Farrell-Hsiang}.  To give a better idea how we are
going to proceed, recall that the observer moduli space ${\mathcal
  M}_{x_0}(S^n) = B\Diff_{x_0}(S^n)$ serves as a classifying space of
smooth fiber bundles with fiber $S^n$ and structure group
$\Diff_{x_0}(S^n)$.  We obtain the universal smooth fiber bundle
\[
       S^n \to   \Riem(S^n)\times_{\Diff_{x_0}(S^n)} S^n \to \Riem(S^n)/ \Diff_{x_0}(S^n) \, . 
\]
In particular, a map $f:S^k \to B\Diff_{x_0}(S^n)$ representing an element $\alpha \in \pi_k
B\Diff_{x_0}(S^n)$ gives rise to a commutative diagram of smooth fiber
bundles
\begin{equation}\label{eq:smooth-bundle}
\begin{diagram}
\setlength{\dgARROWLENGTH}{1.2em}
\node{E}
\arrow{s}
\arrow[2]{e,t}{}
\node[2]{   \Riem(S^n)\times_{\Diff_{x_0}(S^n)} S^n}
\arrow{s}
\\
\node{S^k}
\arrow[2]{e,t}{f}
\node[2]{B\Diff_{x_0}(S^n)}
\end{diagram}
\end{equation}
This shows that a lift of the class $\alpha\in \pi_k B\Diff_{x_0}(S^n)$
to $\pi_k{\mathcal M}^+_{x_0}(S^n)$ is nothing but a family of
psc-metrics of positive scalar curvature on the bundle
$E\to S^k$ from (\ref{eq:smooth-bundle}). 

We will explain the precise relationship in Section
\ref{sec:Hatchers_examples} and show that the construction described
in Section \ref{sec:surgery_in_families} applies to Hatcher's
$S^n$-bundles. Here we make use of a family of Morse functions on
these bundles as described by Goette \cite[Section 5.b]{Goette}. This
will finish the proof of Theorem \ref{thm:posit}.

Given a closed smooth manifold $M$ of dimension $n$, we can take the
fiberwise connected sum of the trivial bundle $S^k \times M \to S^k$
and Hatcher's exotic $S^n$-bundle.  Using additivity of higher torsion
invariants \cite[Section 3]{Igusa-new} we obtain non-trivial elements
in $\pi_k \mathcal{M}_{x_0}(M)$ for given $k$ for any manifold $M$ of
odd dimension $n$ as long as $ k \ll n$.

If in addition $M$ admits a psc-metric, this can be combined with the
fiberwise psc-metric on Hatcher's $S^n$-bundle constructed earlier to
obtain a fiberwise psc-metric on the resulting nontrivial $M$-bundle
over $S^k$. This shows:
\begin{theorem}\label{thm:posit_general_manifold}
Let $M$ be a closed smooth 
manifold admitting a metric $h$ of
positive scalar curvature. If $\dim M$ is odd, then the homotopy groups
$\pi_k({\mathcal M}^+_{x_0}(M),[h])$ are non-trivial for $0<k=4q\ll \dim M$.
\end{theorem}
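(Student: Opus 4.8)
The strategy is to realise the desired class by a fibrewise metric of positive scalar curvature on an $M$-bundle over $S^k$ obtained from Hatcher's $S^n$-bundle by a fibrewise connected sum, and then to detect its non-triviality downstairs in $\mathcal M_{x_0}(M)$ by a higher Franz--Reidemeister torsion class, exactly as in the proof of Theorem \ref{thm:posit}. Write $n=\dim M$ (odd) and fix $k=4q$ with $0<k\ll n$; since all surgeries below take place in the fibre dimension $n$, the largeness of $n$ relative to $k$ ensures that they have codimension at least $3$ and that Theorem \ref{thm:posit} applies to the fibre $S^n$.

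\emph{Step 1 (the bundle and its non-triviality).} Hatcher's construction provides a smooth $S^n$-bundle $E_H\to S^k$ with structure group $\Diff_{x_0}(S^n)$ whose classifying map $S^k\to B\Diff_{x_0}(S^n)$ represents a nonzero class in $\pi_k B\Diff_{x_0}(S^n)\otimes\Q$ (Theorem \ref{thm:Farrell-Hsiang}), and which is fibrewise trivial in a neighbourhood of a section disjoint from the observer section. Let $E:=(S^k\times M)\mathbin{\#_{\mathrm{fib}}}E_H\to S^k$ be the fibrewise connected sum of the trivial $M$-bundle with $E_H$, performed inside that trivial region of $E_H$ and inside an embedded disk $D^n\subset M$ disjoint from $x_0$. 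Then $E\to S^k$ is a smooth bundle with fibre $M\#S^n\cong M$; since all of the cutting and pasting is the identity over a neighbourhood of the section $S^k\times\{x_0\}$, its structure group is $\Diff_{x_0}(M)$, so $E$ is classified by a map $f\colon S^k\to B\Diff_{x_0}(M)=\mathcal M_{x_0}(M)$ representing a class $\beta\in\pi_k\mathcal M_{x_0}(M)$. Because $k\geq 4$ the base $S^k$ is simply connected, so the higher torsion class $\tau_{2q}(E)\in H^{4q}(S^k;\Q)$ is defined; by additivity of higher torsion under fibrewise connected sum \cite[Section~3]{Igusa-new}, the vanishing of $\tau_{2q}$ on trivial bundles, and the non-vanishing of $\tau_{2q}$ for Hatcher's bundle when $n$ is odd \cite{Goette, Igusa-book, Igusa-new}, we get $\tau_{2q}(E)=\tau_{2q}(E_H)\neq 0$. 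Hence $f$ is not nullhomotopic, so $\beta\neq 0$; in fact $\beta\otimes\Q\neq 0$, by pairing $\tau_{2q}$ against the Hurewicz image of $\beta$ as in the proof of Theorem \ref{thm:posit}.

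\emph{Step 2 (the fibrewise psc-metric and the lift).} By (the proof of) Theorem \ref{thm:posit} --- concretely, the family surgery construction of Theorem \ref{theo:construct_metrics-1a} applied to the Hatcher bundle --- the bundle $E_H\to S^k$ carries a fibrewise psc-metric; using the flexibility in that construction together with a standard Gromov--Lawson normalisation, arrange this metric so that on the disk-subbundle along which the connected sum is formed it restricts to the parameter-constant family equal to a fixed half-torpedo metric on $D^n$, with standard neck $ds^2_{n-1}(\delta)+dt^2$ near the gluing sphere. On the complementary piece $S^k\times\bigl(M\setminus\mathrm{int}(D^n)\bigr)$ we take the parameter-constant family obtained from $h$ by the (parameter-independent) Gromov--Lawson deformation near $\partial D^n$ into the same standard neck, changing $h$ only in a collar. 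The two families match along the common neck and assemble to a fibrewise psc-metric $G$ on $E\to S^k$, continuous in the $S^k$-parameter since the $M$-side is constant; equivalently $G$ is a map $\tilde f\colon S^k\to\mathcal M^+_{x_0}(M)$ with $\iota\circ\tilde f\simeq f$. The fibre metric of $G$ over the basepoint of $S^k$ is $h$ with the disk $D^n$ replaced by a standard round cap, i.e.\ the Gromov--Lawson connected sum of $(M,h)$ with a round sphere along a hemisphere-ball; this is psc-isotopic to $h$, so $\tilde f$ may be taken based at a point $[h']$ in the path component of $[h]$.

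\emph{Conclusion.} Since $\iota\circ\tilde f$ represents $\beta\neq 0$ in $\pi_k\mathcal M_{x_0}(M)$, we have $\iota_*[\tilde f]\neq 0$, hence $[\tilde f]\neq 0$ in $\pi_k(\mathcal M^+_{x_0}(M),[h'])\cong\pi_k(\mathcal M^+_{x_0}(M),[h])$, which proves the theorem. I expect the main obstacle to be Step 2: verifying that the fibrewise psc-metric supplied by Theorem \ref{thm:posit} can be brought into Gromov--Lawson normal form near a trivialising section, and made parameter-constant there, without destroying fibrewise positivity of the scalar curvature, so that it glues to the parameter-constant metric on the $M$-side and has a controlled fibre over the basepoint. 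The torsion bookkeeping of Step 1 is routine given \cite{Igusa-new}, and the underlying bundle construction is purely topological.
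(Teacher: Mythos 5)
Your proposal follows the same route as the paper: form the fibrewise connected sum $(S^k\times M)\# E_H\to S^k$, detect its non-triviality downstairs in $\mathcal M_{x_0}(M)$ via additivity of the higher Franz--Reidemeister torsion, and lift to $\mathcal M^+_{x_0}(M)$ by gluing a parameter-constant psc-family on the $M$-side to the fibrewise psc-metric on the Hatcher bundle along a standard torpedo neck. The obstacle you flag at the end of Step~2 is in fact absent: in the paper's construction of the fibrewise psc-metric on the Hatcher $S^n$-bundle (Section~\ref{sec:Hatchers_examples}), the two torpedo caps $(D^n_0)_z$ and $(D^n_1)_z$ are attached \emph{after} the family surgery, with a metric that is a fixed torpedo metric independent of $z\in S^k$; so the metric near a trivialising section is already parameter-constant and in Gromov--Lawson normal form by construction, and no further normalisation is needed. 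One small inaccuracy: over the basepoint $z_0\in S^k$ the fibre metric is not ``$h$ with $D^n$ replaced by a round cap'' --- the fibre of the Hatcher bundle over $z_0$ still carries the three-critical-point Morse function, so the resulting metric on $\widetilde{D^n}_{z_0}$ is the output of the surgery construction, not a bare torpedo. (The paper's own exposition is also brief on the identification of the basepoint $[\widetilde h]$ with $[h]$, so this is a shared, minor gap, not something your approach introduces.)
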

In order to study the homotopy type of the classical moduli space of
psc-metrics it remains to construct examples of manifolds $M$ for
which the non-zero elements in $\pi_k {\mathcal M}_{x_0}^+(M)$
constructed in Theorem \ref{thm:posit_general_manifold} is not mapped
to zero under the canonical map $\pi_k {\mathcal M}^+_{x_0}(M) \to
\pi_k {\mathcal M}^+(M)$. This will be done in Section
\ref{sec:asymmetric_manifold} and leads to a proof of the following
conclusive result.
\begin{theorem}\label{thm:usual_moduli} For any $d > 0$ there exists a 
  closed smooth manifold $M$ admitting a metric $h$ of positive scalar
  curvature so that $\pi_{4q}({\mathcal M}^+(M), [h])$ is non-trivial
  for $0 < q \leq d$.
\end{theorem}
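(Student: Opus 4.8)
The plan is to derive Theorem~\ref{thm:usual_moduli} from Theorem~\ref{thm:posit_general_manifold} by choosing the manifold $M$ so carefully that the forgetful map ${\mathcal M}^+_{x_0}(M)\to{\mathcal M}^+(M)$ does not annihilate the classes constructed there. Fix $d>0$ and let $N=N(d)$ be large enough that Theorem~\ref{thm:posit_general_manifold} applies in all degrees $4q$, $1\le q\le d$, once $\dim M\ge N$. For a closed manifold $M$ of odd dimension $\ge N$ carrying a psc-metric $h$ we then obtain non-zero classes $\alpha_q\in\pi_{4q}({\mathcal M}^+_{x_0}(M),[h])$. Recall that these are non-zero because the image $\beta_q$ of $\alpha_q$ in $\pi_{4q}{\mathcal M}_{x_0}(M)=\pi_{4q}B\Diff_{x_0}(M)$ classifies, together with the map $B\Diff_{x_0}(M)\to B\Diff(M)$, the fibrewise connected sum over $S^{4q}$ of the trivial $M$-bundle with a Hatcher $S^{\dim M}$-bundle, and, since $S^{4q}$ is simply connected, the classifying map of this $M$-bundle lifts to $B\Torr(M)$, where it pairs non-trivially with the higher Franz--Reidemeister torsion class $\tau_{2q}$ by the additivity of higher torsion recalled above. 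As $\Torr(M)$ is a union of path components of $\Diff(M)$, the map $B\Torr(M)\to B\Diff(M)$ is an isomorphism on $\pi_{4q}$ (here $4q\ge 4$), so the image $\gamma_q$ of $\beta_q$ in $\pi_{4q}B\Diff(M)$ is non-zero.

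Next I note that everything descends to the genuine orbit space ${\mathcal M}^+(M)=\Riem^+(M)/\Diff(M)$ provided $\Diff(M)$ acts \emph{freely} on $\Riem^+(M)$: the action of $\Diff(M)$ on $\Riem(M)$, hence on $\Riem^+(M)$, is proper by Ebin \cite{Ebin}, so freeness turns $\Diff(M)\to\Riem^+(M)\to{\mathcal M}^+(M)$ into a principal bundle and yields, exactly as for the observer moduli space in \eqref{eq:Hitchins_sequence}, a homotopy equivalence ${\mathcal M}^+(M)\simeq\Riem^+(M)\times_{\Diff(M)}E\Diff(M)$ and a classifying map ${\mathcal M}^+(M)\to B\Diff(M)$. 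Together with the observer version these fit into the commutative square
\[
\begin{CD}
{\mathcal M}^+_{x_0}(M) @>>> {\mathcal M}^+(M)\\
@VVV @VVV\\
B\Diff_{x_0}(M) @>>> B\Diff(M),
\end{CD}
\]
and a diagram chase shows that the image of $\alpha_q$ in $\pi_{4q}({\mathcal M}^+(M),[h])$ maps to $\gamma_q\ne 0$ under $\pi_{4q}{\mathcal M}^+(M)\to\pi_{4q}B\Diff(M)$, hence is non-zero. So Theorem~\ref{thm:usual_moduli} will follow once, for every $d$, we produce a closed smooth manifold $M$ which (i) admits a psc-metric, (ii) is of odd dimension $\ge N(d)$, and (iii) is such that $\Diff(M)$ acts freely on $\Riem^+(M)$.

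Condition (iii) holds whenever $M$ is \emph{asymmetric} in the strong sense of carrying no non-trivial effective action of a compact Lie group: a diffeomorphism fixing a metric $g$ is an isometry of $(M,g)$, hence lies in the compact Lie group $\Isom(M,g)$, so a non-trivial one would generate either a non-trivial finite cyclic action or --- via a circle subgroup of the closure of the cyclic group it generates --- a non-trivial $S^1$-action on $M$. Thus the problem is reduced to the geometric task, to be carried out in Section~\ref{sec:asymmetric_manifold}, of producing asymmetric closed manifolds of positive scalar curvature in arbitrarily large odd dimensions.

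I expect this to be the main obstacle, since the two requirements pull in opposite directions: asymmetry is most readily enforced by means of a sufficiently complicated fundamental group with no self-symmetries, whereas positive scalar curvature --- together with its index-theoretic obstructions --- is most comfortable with a trivial fundamental group. The plan is to start from a simply connected model, which by the Gromov--Lawson theorem \cite{GL1} admits a psc-metric as soon as it is non-spin and of dimension $\ge 5$, and then to rigidify it in the spirit of the known constructions of asymmetric manifolds, by surgeries of codimension $\ge 3$ that kill every compact Lie group symmetry while keeping $M$ simply connected and of a prescribed odd dimension; since such surgeries have codimension $\ge 3$, the psc-metric survives them by the Gromov--Lawson surgery construction \cite{GL1}. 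The delicate point will be to verify that the rigidified $M$ admits \emph{no} Riemannian metric with a non-trivial isometry, which is the technical core of Section~\ref{sec:asymmetric_manifold}.
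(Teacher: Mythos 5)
Your high-level reduction is reasonable, but it rests on a hypothesis that is both much stronger than what the paper actually uses and much harder to verify, and the route you propose for verifying it does not work.

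You want $\Diff(M)$ to act \emph{freely} on $\Riem^+(M)$, so that ${\mathcal M}^+(M)$ becomes a classifying space and the diagram chase through $B\Diff(M)$ goes through. As you correctly observe, freeness is equivalent to saying that \emph{no} Riemannian metric on $M$ has a non-trivial isometry --- i.e.\ $M$ admits no non-trivial effective compact group action, finite or continuous. This is the condition of \emph{strong asymmetry}, and producing strongly asymmetric closed manifolds with psc-metrics in all large odd dimensions is a serious problem which neither your outline nor the paper solves. Your sketch (``start simply connected and rigidify by codimension-$\geq 3$ surgeries that kill every compact Lie group symmetry'') is not a viable plan: surgery does not ``kill symmetries'' in any controllable way, and there is no mechanism to rule out entirely new isometric finite group actions on the surgered manifold. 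You even note yourself that asymmetry is usually enforced via a rigid fundamental group, and then immediately insist on staying simply connected --- pulling against your own observation.

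The paper's Section~\ref{sec:asymmetric_manifold} in fact does something substantially weaker and subtler. It does \emph{not} produce a manifold with no non-trivial isometries, and $\Diff(M)$ does \emph{not} act freely on $\Riem^+(M)$ in the paper's example $M=B^n\times N^{4k}$ (which, contrary to your plan, is not simply connected --- $B^n$ is a closed hyperbolic manifold with trivial outer automorphism group). What the paper arranges, in Lemma~\ref{manifold}, is only that (a) every $S^1$-action on $M$ is trivial, so by Myers--Steenrod every isotropy group $\Diff(M)_g$ is \emph{finite}; (b) the image $A_H(M)$ of $\Diff(M)$ in $\Aut(H_*(M;\Q))$ is finite; and (c) all diffeomorphisms preserve orientation. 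With (a) in place, the map $\mu:B\Diff(M)=\Riem(M)\times_{\Diff(M)}E\Diff(M)\to\Riem(M)/\Diff(M)$ has fibres $B(\Diff(M)_g)$ with trivial reduced rational cohomology, and the Leray spectral sequence in sheaf cohomology (together with Ebin's slice theorem for local triviality and Bredon's comparison theorems to pass back to singular cohomology) shows that $\mu$ is a rational cohomology isomorphism \emph{without} the orbit map being a fibration or the quotient being a classifying space. Conditions (b) and (c) are what make the transfer argument from $B\Torr(M)$ to $B\Diff(M)$ work; your appeal to ``$\pi_{4q}$-isomorphism since $\Torr(M)$ is a union of components'' is correct but does not replace the transfer, because the paper's final lemma requires a nonzero \emph{Hurewicz image} in $H_{4q}(B\Diff(M);\Q)$, and the passage $B\Torr(M)\to B\Diff(M)$ is only a rational homology injection because $\Diff(M)/\Torr(M)=A_H(M)$ is finite. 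In short: the genuine gap in your proposal is the demand for a free $\Diff(M)$-action; the paper deliberately avoids this demand and replaces it with the sheaf-theoretic Leray argument, which is the actual technical core of Section~\ref{sec:asymmetric_manifold}.
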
 

  \begin{remark}
    One should mention that the manifolds we construct in Theorem
    \ref{thm:usual_moduli} do not admit a spin structure and are of odd
    dimension. In particular, the usual methods to distinguish elements of
    $\pi_0\mathcal{M}^+(M)$, which use the index of the Dirac operator, do not
    apply to these manifolds, and we have no non-trivial lower bound on the
    number of components of $\mathcal{M}^+(M)$.
  \end{remark}

\begin{remark}
  Finding non-zero elements of $\pi_k\Riem^+(M)$ for $k>1$ remains an
  open problem.  It would be especially interesting to find examples
  with non-zero image in $\pi_k(\Riem^+(M)/\Diff(M))$, or at least in
  $\pi_k(\Riem^+(M)/\Diff_{x_0}(M))$.

  We expect that a solution of this problem requires a different
  method than the one employed in Sections \ref{sec:Hatchers_examples}
  and \ref{sec:asymmetric_manifold} of our paper.
\end{remark}
\subsection{Acknowledgement} {Boris Botvinnik} would
like to thank K. Igusa and D. Burghelea for inspiring discussions on
topological and analytical torsion and thank SFB-478 (Geometrische
Strukturen in der Mathematik, M\"unster, Germany) and IHES for
financial support and hospitality. {Mark Walsh also would
  like to thank SFB-478 for financial support and hospitality.}
{Thomas Schick} was partially supported by the Courant
Research Center ``Higher order structures in Mathematics'' within the
German initiative of excellence.

\section{The surgery method in twisted families}
\label{sec:surgery_in_families}
The aim of this section is to prove a result on the construction of 
fiberwise metrcis of positive scalar curvature on certain smooth fiber
bundles. At first we briefly review the Gromov-Lawson
surgery technique \cite{GL1} on a single manifold. Here we use the approach
developed by Walsh \cite{Walsh,Walsh2}.
\subsection{Review of the surgery technique on a single
  manifold} \label{subsec:review_surgery} Let $W$ be a compact
manifold with non-empty boundary $\p W$ and with $\dim W = n+1$.  We
assume that the boundary $\p W $ is the disjoint union of two
manifolds $\p_0 W$ and $\p_1 W$ both of which come with collars
\begin{equation}\label{eq:colars}
\p_0 W\times [0,\epsilon)\subset W,
\ \ \ 
\p_1 W\times (1-\epsilon, 1] \subset W,
\end{equation}
where $\epsilon$ is taken with respect to some fixed reference metric
${\mathfrak m}$ on $W$, see Definition \ref{compat_metric} below.  By a
\emph{Morse function on $W$} we mean a Morse function $f: W\to [0,1]$
such that
$$
f^{-1}(0) = \p_0 W, \ \ \ f^{-1}(1) =\p_1 W
$$ 
and the restriction of $f$ to the collars (\ref{eq:colars}) coincides with
the projection onto the second factor
$$
\p_0 W\times [0,\epsilon)\to [0,\epsilon), \ \ \ 
\p_1 W\times (1-\epsilon, 1] \to (1-\epsilon, 1].
$$
We denote by $\Cr(f)$ the set of critical points of $f$.  

We say that a Morse function $f: W\to [0,1]$ is \emph{admissible} if
all its critical points have indices at most $(n-2)$ (where $\dim
W=n+1$).  We note that the last condition is equivalent to the
``codimension at least three'' requirement for the Gromov-Lawson
surgery {method}.  We denote by ${\mathsf M}{\mathsf
  o}{\mathsf r}{\mathsf s}{\mathsf e}(W)$ and $\Morse^{\adm}(W)$ the
spaces of Morse functions and admissible Morse functions,
respectively, which we equip with the $C^{\infty}$-topologies.
\begin{definition}\label{compat_metric}
  Let $f\in \Morse^{\adm}(W)$.  A Riemannian metric ${\mathfrak m}$ on
  $W$ is \emph{compatible} with the Morse function $f$ if for every
  critical point $p\in \Cr(f)$ with $\ind p=\lambda$ the positive and
  negative eigenspaces $T_pW^+$ and $T_pW^-$ of the Hessian $d^2f$ are
  $\mathfrak m$-orthogonal, and $d^2f|_{T_pW^+}=\mathfrak
  m|_{T_pW^+}$, $d^2f|_{T_pW^-}=-\mathfrak m|_{T_pW^-}$.
\end{definition}
We notice that for a given Morse function $f$, the space of
 compatible metrics is convex. Thus the space of pairs $(f,{\mathfrak
  m})$, where $f\in \Morse^{\adm}(W)$, and ${\mathfrak m}$ is a metric
compatible with $f$, is homotopy equivalent to the space
$\Morse^{\adm}(W)$. We call a pair $(f,{\mathfrak m})$ as above an
\emph{admissible Morse pair}.  We emphasize that the metric
${\mathfrak m}$ on $W$ has no relation to the psc-metrics we are going
to construct.

The ideas behind the following theorem go back to Gromov-Lawson
\cite{GL1} and Gajer \cite{Ga}. 
\begin{theorem}
  \label{GLcob} \cite[Theorem 2.5]{Walsh} Let $W$ be a smooth compact cobordism
  with $\p W= M_0\sqcup M_1$. Assume that $g_0$ is a positive scalar
  curvature metric on $M_0$ and $(f,{\mathfrak m})$ is an admissible
  Morse pair on $W$.  Then there is a psc-metric
  $\bar{g}=\bar{g}(g_0,f,{\mathfrak m})$ on $W$ which extends $g_0$
  and has a product structure near the boundary.
\end{theorem}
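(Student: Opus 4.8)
The plan is to reduce Theorem~\ref{GLcob} (= \cite[Theorem 2.5]{Walsh}) to the core Gromov--Lawson construction near a single critical point, and then to argue inductively along the critical values of $f$. First I would use the admissible Morse pair $(f,\mathfrak m)$ to slice the cobordism $W$ into elementary pieces: choose regular values $0 = a_0 < a_1 < \dots < a_r = 1$ of $f$ so that each slab $W_i := f^{-1}([a_{i-1},a_i])$ contains exactly one critical point $p_i$, with $N_{i-1} := f^{-1}(a_{i-1})$ and $N_i := f^{-1}(a_i)$ the incoming and outgoing level sets. The compatibility of $\mathfrak m$ with $f$ gives, near each $p_i$, standard Morse coordinates in which $f$ is quadratic and $\mathfrak m$ is the flat metric, so the descending sphere of $p_i$ sits inside $N_{i-1}$ as a round embedded $S^{\lambda_i-1}$ with a standard normal disk bundle; admissibility ($\lambda_i \le n-2$) is exactly the codimension $\ge 3$ hypothesis that the Gromov--Lawson surgery requires.

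The inductive step is the heart of the matter. Suppose we have already produced a psc-metric $g_{i-1}$ on $N_{i-1}$ (for $i=1$ this is the given $g_0$ on $M_0$). I would invoke the single-surgery result of Gromov--Lawson, in the refined form of Walsh \cite{Walsh,Walsh2}: because the descending sphere $S^{\lambda_i - 1} \subset N_{i-1}$ has trivial, standard normal bundle and codimension $\ge 3$, there is a psc-metric $g_{i-1}'$ on $N_{i-1}$, isotopic to $g_{i-1}$ through psc-metrics, which is ``standard'' (a product $ds^2 + g_{\mathrm{tor}}^{\lambda_i-1} + \epsilon^2 ds_{n-\lambda_i}^2$-type metric, i.e. the torpedo form) on a tubular neighborhood of that sphere. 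Walsh's trace construction then yields a psc-metric on the elementary cobordism $W_i$ that restricts to $g_{i-1}'$ on $N_{i-1}$, is a product near $N_{i-1}$, and whose restriction $g_i$ to the outgoing level $N_i$ is again of positive scalar curvature and is a product near $N_i$ as well. Gluing the isotopy (made into a product psc-metric on a collar $N_{i-1}\times[a_{i-1}-\delta,a_{i-1}]$ by the standard trick of bending the isotopy parameter) to this elementary piece gives a psc-metric on all of $W_i$ with product structure on both ends, extending $g_{i-1}$.

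Finally I would concatenate: stack the metrics on $W_1, \dots, W_r$ along the common product collars $N_i \times \{a_i\}$. Since each piece is a product near each of its two boundary components and the boundary metrics match by construction, the glued metric is smooth, has positive scalar curvature throughout, restricts to $g_0$ on $M_0 = \p_0 W$, and is a product near $\p W$; the resulting metric on $W$ is the desired $\bar g = \bar g(g_0, f, \mathfrak m)$, and the construction is manifestly determined by the data $(g_0, f, \mathfrak m)$. The main obstacle is the single-surgery input: controlling the scalar curvature during the Gromov--Lawson bending near the descending sphere and verifying that the metric produced on the outgoing level set is again psc with the correct product/torpedo normal form, so that the induction can continue — but this is precisely what \cite[Theorem 2.5]{Walsh} (building on \cite{GL1,Ga}) supplies, so in the present context it may be quoted rather than reproved. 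The only genuinely new bookkeeping is keeping all the collar and product structures compatible across the $r$ gluings, which is routine once the elementary step is in hand.
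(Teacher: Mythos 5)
Your proposal follows essentially the same route as the paper's own (which is itself only an outline deferring to Walsh): decompose $W$ along regular levels into elementary cobordisms, use the Gromov--Lawson/Walsh torpedo-metric transition near each trajectory disk pair, extend as a product elsewhere, and concatenate along product collars. One small imprecision worth flagging: compatibility of $\mathfrak m$ with $f$ only matches the Hessian and the metric \emph{at} the critical point, so it does not immediately produce coordinates in which $f$ is quadratic and $\mathfrak m$ is simultaneously flat — this is harmless in the single-manifold setting (one can always pick Morse coordinates), but it is precisely the point that requires the extra deformation argument when the paper passes to families in Theorem~\ref{theo:construct_metrics-1a}.
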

\begin{proof} We will provide here only an outline and refer to
  \cite[Theorem 2.5]{Walsh} for details.
\begin{figure}[!h]
\begin{picture}(0,0)%
\includegraphics{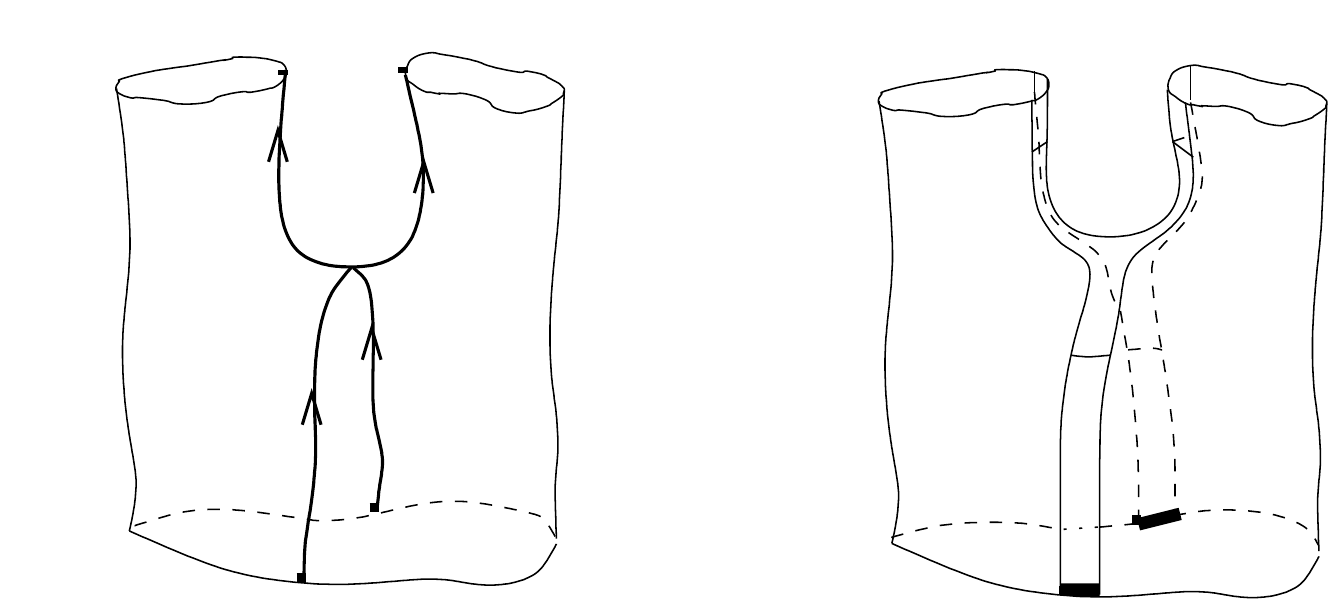} 
\end{picture}%
\setlength{\unitlength}{3947sp}%
\begingroup\makeatletter\ifx\SetFigFont\undefined%
\gdef\SetFigFont#1#2#3#4#5{%
  \reset@font\fontsize{#1}{#2pt}%
  \fontfamily{#3}\fontseries{#4}\fontshape{#5}%
  \selectfont}%
\fi\endgroup%
\begin{picture}(6381,2877)(561,-3478)
\put(2207,-1780){\makebox(0,0)[lb]{\smash{{\SetFigFont{10}{12}{\rmdefault}{\mddefault}{\updefault}{\color[rgb]{0,0,0}$w$}%
}}}}
\put(1230,-1530){\makebox(0,0)[lb]{\smash{{\SetFigFont{10}{12}{\rmdefault}{\mddefault}{\updefault}{\color[rgb]{0,0,0}$K_{+}^{q+1}(w)$}%
}}}}
\put(2445,-2568){\makebox(0,0)[lb]{\smash{{\SetFigFont{10}{12}{\rmdefault}{\mddefault}{\updefault}{\color[rgb]{0,0,0}$K_{-}^{p+1}(w)$}%
}}}}
\put(2170,-3255){\makebox(0,0)[lb]{\smash{{\SetFigFont{10}{12}{\rmdefault}{\mddefault}{\updefault}{\color[rgb]{0,0,0}$S_{-}^{p}(w)$}%
}}}}
\put(1982,-755){\makebox(0,0)[lb]{\smash{{\SetFigFont{10}{12}{\rmdefault}{\mddefault}{\updefault}{\color[rgb]{0,0,0}$S_{+}^{q}(w)$}%
}}}}
\put(5939,-3349){\makebox(0,0)[lb]{\smash{{\SetFigFont{10}{12}{\rmdefault}{\mddefault}{\updefault}{\color[rgb]{0,0,0}$N$}%
}}}}
\put(5689,-1624){\makebox(0,0)[lb]{\smash{{\SetFigFont{10}{12}{\rmdefault}{\mddefault}{\updefault}{\color[rgb]{0,0,0}$U$}%
}}}}
\put(576,-1961){\makebox(0,0)[lb]{\smash{{\SetFigFont{10}{12}{\rmdefault}{\mddefault}{\updefault}{\color[rgb]{0,0,0}$W$}%
}}}}
\put(614,-986){\makebox(0,0)[lb]{\smash{{\SetFigFont{10}{12}{\rmdefault}{\mddefault}{\updefault}{\color[rgb]{0,0,0}$M_1$}%
}}}}
\put(614,-3074){\makebox(0,0)[lb]{\smash{{\SetFigFont{10}{12}{\rmdefault}{\mddefault}{\updefault}{\color[rgb]{0,0,0}$M_0$}%
}}}}
\end{picture}%
\caption{Trajectory disks of the critical point $w$ contained inside a disk $U$}
\label{trajflow}
\end{figure}
We begin with a few topological observations. For simplicity, we
assume for the moment that $W$ is an elementary cobordism, i.e. that $f$ has
a single critical point $w$ of index $p+1$. The general case is obtained by
repeating the construction for each critical point. Fix a gradient like vector
field for $f$. Intersecting transversely
at $w$ there is a pair of trajectory disks $K_{-}^{p+1}$ and $K_{+}^{q+1}$,
see Fig. \ref{trajflow}.  Here the lower $(p+1)$-dimensional disk
$K_{-}^{p+1}$ is bounded by an embedded $p$-sphere $S_{-}^{p}\subset M_0$. It
consists of the union of segments of integral curves of the gradient
vector field beginning at the bounding sphere and ending at $w$. Here
and below we use the compatible metric ${\mathfrak m}$ for all
gradient vector fields.  Similarly, $K_{+}^{q+1}$ is a
$(q+1)$-dimensional disk which is bounded by an embedded $q$-sphere
$S_{+}^{q}\subset M_1$. The spheres $S_{-}^{p}$ and $S_{+}^{q}$ are
known as trajectory spheres associated to the critical point $w$.  As
usual, the sphere $S_{-}^{p}\subset M_0$ is embedded into $M_0$
together with its neighborhood $N=S_{-}^{p}\times D^{q+1}\subset
M_0$.

We denote by $U$ the union of all trajectories originating at the 
neighborhood $N$, and notice that $U$ is a disk-shaped neighborhood
of $K_{-}^{p+1}\cup K_{+}^{q+1}$, see Fig. \ref{trajflow}. A
continuous shrinking of the radius of $N$ down to zero induces a
deformation retraction of $U$ onto $K_{-}^{p+1}\cup K_{+}^{q+1}$.

Now we consider the complement $W\setminus U$, which coincides with the
union of all trajectories originating at $M_0\setminus N$. By
assumption none of these trajectories have critical points. We use
the normalized gradient vector field of $f$ to specify a
diffeomorphism
$$
\psi : W\setminus U\to (M_0\setminus N)\times [0,1].
$$
Now we construct the metric $\bar{g}$. On the region $W\setminus U$,
we define the metric $\bar{g}$ to be simply $g_{0}|_{M_0\setminus
  N}+dt^{2}$ where the $t$ coordinate comes from the embedding $\psi$
above.
\begin{figure}[h!bp]
\begin{picture}(0,0)%
\includegraphics{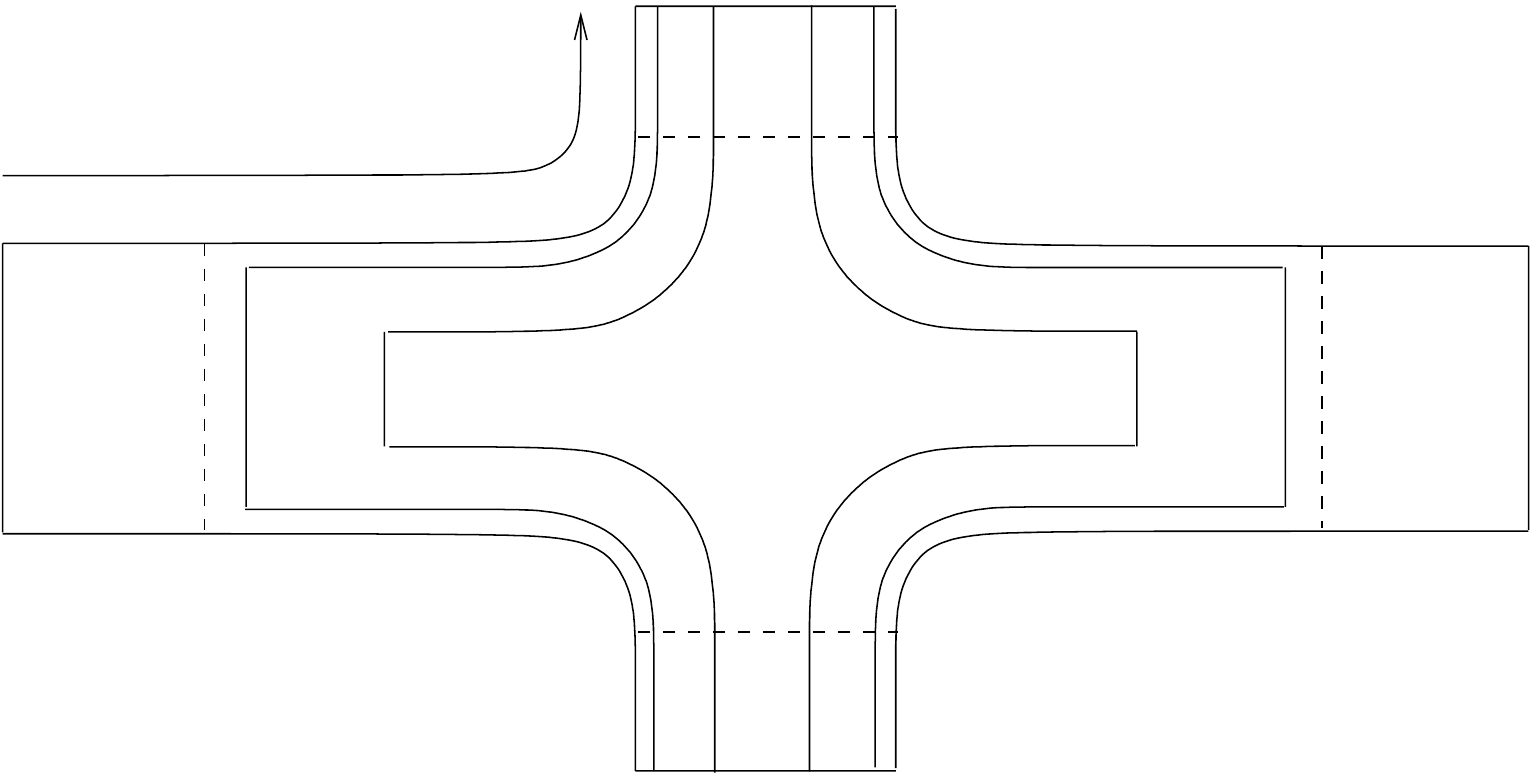}
\end{picture}%
\setlength{\unitlength}{3947sp}%
\begingroup\makeatletter\ifx\SetFigFont\undefined%
\gdef\SetFigFont#1#2#3#4#5{%
  \reset@font\fontsize{#1}{#2pt}%
  \fontfamily{#3}\fontseries{#4}\fontshape{#5}%
  \selectfont}%
\fi\endgroup%
\begin{picture}(7349,3720)(995,-3850)
  \put(4219,-2034){\makebox(0,0)[lb]{\smash{{\SetFigFont{10}{14.4}{\rmdefault}{\mddefault}{\updefault}{\color[rgb]{0,0,0}$\mbox{{\sl standard}}$}%
        }}}}
  \put(2532,-896){\makebox(0,0)[lb]{\smash{{\SetFigFont{10}{14.4}{\rmdefault}{\mddefault}{\updefault}{\color[rgb]{0,0,0}$t$}%
        }}}}
  \put(5394,-3784){\makebox(0,0)[lb]{\smash{{\SetFigFont{10}{14.4}{\rmdefault}{\mddefault}{\updefault}{\color[rgb]{0,0,0}$g_1+dt^{2}$}%
        }}}}
  \put(5407,-346){\makebox(0,0)[lb]{\smash{{\SetFigFont{10}{12}{\rmdefault}{\mddefault}{\updefault}{\color[rgb]{0,0,0}$g_1+dt^{2}$}%
        }}}}
  \put(2439,-1649){\makebox(0,0)[lb]{\smash{{\SetFigFont{10}{12}{\rmdefault}{\mddefault}{\updefault}{\color[rgb]{0,0,0}$\mbox{{\sl transition}}$}%
        }}}}
  \put(2414,-2474){\makebox(0,0)[lb]{\smash{{\SetFigFont{10}{12}{\rmdefault}{\mddefault}{\updefault}{\color[rgb]{0,0,0}$\mbox{{\sl transition}}$}%
        }}}}
  \put(6014,-1636){\makebox(0,0)[lb]{\smash{{\SetFigFont{10}{12}{\rmdefault}{\mddefault}{\updefault}{\color[rgb]{0,0,0}$\mbox{{\sl transition}}$}%
        }}}}
  \put(6014,-2499){\makebox(0,0)[lb]{\smash{{\SetFigFont{10}{12}{\rmdefault}{\mddefault}{\updefault}{\color[rgb]{0,0,0}$\mbox{{\sl transition}}$}%
        }}}}
  \put(1089,-2049){\makebox(0,0)[lb]{\smash{{\SetFigFont{10}{12}{\rmdefault}{\mddefault}{\updefault}{\color[rgb]{0,0,0}$g_0+dt^{2}$}%
        }}}}
  \put(7589,-2024){\makebox(0,0)[lb]{\smash{{\SetFigFont{10}{12}{\rmdefault}{\mddefault}{\updefault}{\color[rgb]{0,0,0}$g_0+dt^{2}$}%
        }}}}
  \put(5451,-3224){\makebox(0,0)[lb]{\smash{{\SetFigFont{10}{12}{\rmdefault}{\mddefault}{\updefault}{\color[rgb]{0,0,0}$f=c_1$}%
        }}}}
  \put(5476,-849){\makebox(0,0)[lb]{\smash{{\SetFigFont{10}{12}{\rmdefault}{\mddefault}{\updefault}{\color[rgb]{0,0,0}$f=c_1$}%
        }}}}
  \put(1901,-2961){\makebox(0,0)[lb]{\smash{{\SetFigFont{10}{12}{\rmdefault}{\mddefault}{\updefault}{\color[rgb]{0,0,0}$f=c_0$}%
        }}}}
  \put(7276,-2949){\makebox(0,0)[lb]{\smash{{\SetFigFont{10}{12}{\rmdefault}{\mddefault}{\updefault}{\color[rgb]{0,0,0}$f=c_0$}%
        }}}}
\end{picture}%
\caption{The metric $\bar{g}$ on the disk $U$}
\label{morsecoordblock}
\end{figure}
To extend this metric over the
region $U$, we have to do more work.
Notice that the boundary of $U$ decomposes as
\begin{equation*}
  \p U = (S^{p}\times D^{q+1})\cup 
(S^{p}\times S^{q}\times I)\cup (D^{p+1}\times S^{q}).
\end{equation*}
Here $S^{p}\times D^{q+1}\subset M_0$ is of course the tubular
neighborhood $N$ while the $D^{p+1}\times S^{q}\subset M_1$ piece is a tubular
neighborhood of the outward trajectory sphere $S_{+}^{q}\subset M_1$.

Without loss of generality assume that
$f(w)=\frac{1}{2}$. Let $c_0$ and $c_1$ be constants satisfying
$0<c_0<\frac{1}{2}<c_{1}<1$. The level sets $f=c_0$ and $f=c_1$ divide
$U$ into three regions: 
$$
\begin{array}{rcl}
U_0& = & f^{-1}([0,c_1])\cap U,
\\
\\
U_{w}& = & f^{-1}([c_0, c_1])\cap U,  
\\
\\
U_1 &= & f^{-1}([c_1, 1])\cap U.
\end{array} 
$$
The region $U_0$ is diffeomorphic to $N\times [0,c_1]$. We use again
the flow to identify $U_0$ with $N\times [0,c_1]$ in a way compatible
with the identification of $W\setminus U$ with $M_0\setminus N\times
I$. Then, on $U_0$, we define $\bar{g}$ as the product
$g_{0}|_{N}+dt^{2}$. Moreover, we extend this metric
$g_{0}|_{N}+dt^{2}$ near the $S^{p} \times S^{q}\times I$ part of the
boundary, where again $t$ is the trajectory coordinate.

We will now define a family of particularly useful psc-metrics on the
disk $D^k$. For a detailed discussion see \cite{Walsh}.
\begin{definition}
  Let $\delta>0$ and $\rho_{\delta}$ be a smooth
  function $\rho_{\delta}: (0,\infty)\to\R$ satisfying the following
  conditions:
  \begin{enumerate}
  \item\label{item:1}
%[{\bf (1)}] 
$\rho_\delta(t)=\delta\sin{(\frac{t}{\delta})}$ when $t$ is
    near $0$;
  \item\label{item:2}
%[{\bf (2)}] 
$\rho_\delta(t)=\delta$ when $t\geq\delta\cdot \frac{\pi}{2}$;
  \item\label{item:3}
%[{\bf (3)}] 
$\ddot{\rho_{\delta}}(t)\leq 0$.
  \end{enumerate}
  Clearly such functions $\rho_{\delta}$ exists, furthermore, the space of
  functions satisfying \ref{item:1}, \ref{item:2}, \ref{item:3} for some
  $\delta>0$ 
  is convex. Let $r$ be 
  the standard radial distance function on $\mathbb{R}^k$, and $ds_{k-1}^2$ be
  the standard metric on $S^{k-1}$ (of radius one). Then the metric $dr^2 +
  \rho_\delta(r)^2ds_{k-1}^2$ on $(0,\infty)\times S^{k-1}$ is well-defined on
  $\R^k$. By restricting this metric to $(0,b]\times S^{k-1}$, one obtains the
  metric $g_{\tor}^{k}(\delta)$ on $D^k$. This metric is defined to be a
  \emph{torpedo metric}, see Fig. \ref{torpedo}.
\end{definition}
\begin{figure}[!htbp]
\begin{picture}(0,0)%
\includegraphics[height=1in]{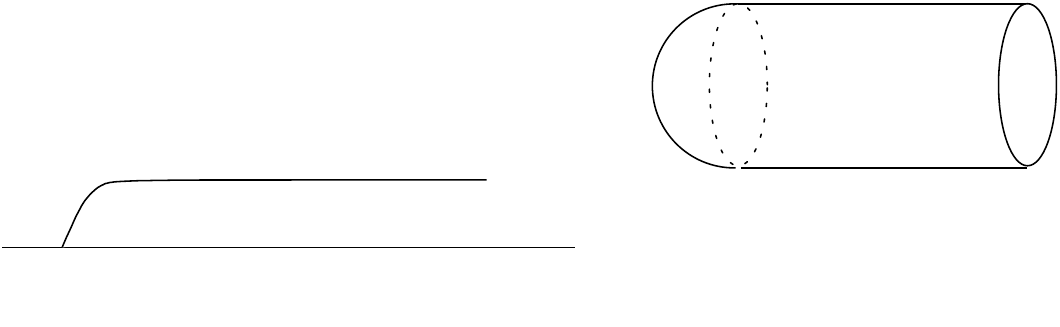}
\end{picture}%
\setlength{\unitlength}{2847sp}%
\begingroup\makeatletter\ifx\SetFigFont\undefined%
\gdef\SetFigFont#1#2#3#4#5{%
  \reset@font\fontsize{#1}{#2pt}%
  \fontfamily{#3}\fontseries{#4}\fontshape{#5}%
  \selectfont}%
\fi\endgroup%
\begin{picture}(5079,1559)(1902,-7227)
\put(2114,-7136){\makebox(0,0)[lb]{\smash{{\SetFigFont{10}{8}{\rmdefault}{\mddefault}{\updefault}{\color[rgb]{0,0,0}$0$}%
}}}}
\put(4189,-7161){\makebox(0,0)[lb]{\smash{{\SetFigFont{10}{8}{\rmdefault}{\mddefault}{\updefault}{\color[rgb]{0,0,0}$b$}%
}}}}
\end{picture}%
\caption{A torpedo function and the resulting torpedo metric}
\label{torpedo}
\end{figure} 
\begin{remark}
  It is easy to show that the above conditions \ref{item:1}, \ref{item:2},
  \ref{item:3} guarantee 
  that $g_{\tor}^{k}(\delta)$ has positive scalar curvature. Moreover
  it is $SO(k)$-symmetric and is a product with the standard metric on
  the $(k-1)$-sphere of radius $\delta$ near the boundary of $D^k$ and
  is the standard metric on the $k$-sphere of radius $\delta$ near the
  center of the disk. Also one can show that the scalar curvature of
  $g_{\tor}^{k}(\delta)$ can be bounded below by an arbitrarily large
  constant by choosing $\delta$ sufficiently small.
\end{remark}
The most delicate part of the construction, carried out carefully in
\cite{Walsh}, involves the following: Inside the region $U_w$, which is
identified with the product $D^{p+1}\times D^{q+1}$, the metric
smoothly passes  into a standard product
$g_{\tor}^{p+1}(\epsilon)+g_{\tor}^{q+1}(\delta)$ for some
appropriately chosen $\epsilon,\delta>0$, globally keeping the scalar
curvature positive. This is done so that the induced metric on the
level set $f^{-1}(c_1)$, denoted $g_1$, is precisely the metric
obtained by applying the Gromov-Lawson construction to
$g_0$. Furthermore, near $f^{-1}(c_1)$ we have $\bar{g}=g_1+dt^{2}$. Finally,
on $U_{1}$, which is identified with $D^{p+1}\times S^{q}\times
[c_1,1]$ in the usual manner, the metric $\bar{g}$ is simply the
product $g_1+dt^{2}$. See Fig.~\ref{morsecoordblock} for an
illustration.

After the choice of the Morse coordinate diffeomorphism with $D^{p+1}\times
D^{q+1}$ (and of the other parameters like $\epsilon$ and $\delta$), the
construction is explicit and depends continuously on the given metric $g_0$ on
$S^p\times D^{q+1}$. 

Later on we will need the following additional facts. The next lemma
is proved in \cite[Section 3]{Walsh}.
\begin{lemma}\label{lem:isotopy}
  The ``initial'' transition consists of an isotopy. In particular, $g_0$ is
  isotopic to a metric which, on a neighborhood diffeomorphic to $S^p\times
  D^{q+1}$ of the surgery sphere $S^p_-$ in $M_0$, is
  $\delta^2ds^2_p+g_{\tor}^{q+1}(\delta)$. 
\end{lemma}
\begin{lemma}\label{symmetry}
  The whole construction is $O(p+1)\times O(q+1)$-equivariant.
\end{lemma}
\begin{proof}
  By construction, the standard product of torpedo metrics even is
  $O(p+1)\times O(q+1)$-invariant. It is a matter of carefully going through
  the construction of the transition metric in \cite{Walsh} to check that this
  construction is equivariant for the obvious action of these groups. This is
  done in \cite[Lemma 2.2]{Walsh2}. 
\end{proof}
Lemma \ref{symmetry} will be of crucial importance later, when in a non-trivial
family we cannot choose globally defined Morse coordinates giving
diffeomorphisms to $D^{p+1}\times D^{q+1}$ (as the bundle near the critical
set is not trivial). We will construct Morse coordinates well defined up to
composition with elements of $O(p+1)\times O(q+1)$. The equivariance of Lemma
\ref{symmetry} then implies that our construction, which a priori depends on
the choice of these coordinates, is consistent and gives rise to a smooth
globally defined family of metrics.

We should emphasize that this construction can be carried out for a
tubular neighborhood $N$ of arbitrarily small radius and for $c_0$
and $c_1$ chosen arbitrarily close to $\frac{1}{2}$. Thus the region
$U_w$, on which the metric $\bar{g}$ is not simply a product and is
undergoing some kind of transition, can be made arbitrarily small with
respect to the background metric ${\mathfrak m}$. As critical points
of a Morse function are isolated, it follows that this construction
generalizes easily to Morse functions with more than one critical
point.
\end{proof}
\subsection{Extension to families}
There is a number of ways to generalize the surgery procedure to
families of manifolds. A construction relevant to our goals leads to
families of Morse functions, or maps with \emph{fold} singularities.
We start with a local description. 
\begin{definition} \label{def:standard_fold} A map $F: \R^k\times \R^{n+1}\to \R^k\times
    \R$ is called a {\em standard map with a fold singularity} of index $\lambda$, 
    if there is a $c \in \R$ so that $f$ is given as
\begin{equation}\label{fold}
\begin{array}{rcl}
\R^k\times \R^{n+1}&\longrightarrow& \R^k\times \R,
\\
(y,x) & \longmapsto & \left( y, c-x_1^2-\cdots-x_{\lambda}^2+
x_{\lambda+1}^2+\cdots+x_{n+1}^2\right).
\end{array}
\end{equation}
Roughly speaking, the composition 
\[
   \R^k \times \R^{n+1} \stackrel{F}{\to} \R^k \times \R \stackrel{p_2}{\to} \R
\]
with the projection $p_2$ onto the second factor defines a $\R^k$-parameterized family of Morse functions of index $\lambda$ 
on $\R^{n+1}$
in standard form.   
% Clearly the singularities of the map (\ref{fold}) are parameterized by
% the subspace $\R^k\times \{0\}\subset \R^k\times \R^{n+1}$. 

%A smooth map $F=(p,f)\colon E\to Z \times \R$ between manifolds of
%dimension $\dim E=n+1+k$ and $\dim Z=k+1$ is a \emph{family of Morse
 % functions} or a \emph{fold map} if $p$ is a smooth bundle projection
%and locally near each critical point of $f$ is equivalent to the bundle 
%map $a map
%with fold singularities.
\end{definition}
Let $W$ be a compact manifold with boundary $\p W \neq \emptyset$,
$\dim W= n+1$. We denote by $\Diff(W,\p W)$ the group of all
diffeomorphisms of $W$ which restrict to the identity near the
boundary $\p W$.  Then we consider a smooth fiber bundle $\pi: E\to B$
with fiber $W$, where $\dim B= k$ and $\dim E=n+1+k$. The structure
group of this bundle is assumed to be $\Diff(W,\p W)$ and the base
space $B$ to be a compact smooth manifold.  Assume that the boundary
$\p W$ is split into a disjoint union: $\p W = \p_0 W\sqcup \p_1 W$.

Let $\pi_0: E_0\to B$, $\pi_1: E_1\to B$ be the restriction of the
fiber bundle $\pi: E\to B$ to the fibers $\p_0 W$ and $\p_1 W$
respectively. Since each element of the structure group $\Diff(W,\p W)$
restricts to 
the identity near the boundary, the fiber bundles $\pi_0: E_0\to B$,
$\pi_1: E_1\to B$ are trivialized:
$$
E_0 = B \times \p_0 W \stackrel{\pi_0}{\longrightarrow} B, \ \ \ E_1
=B\times \p_1 W \stackrel{\pi_1}{\longrightarrow} B.
$$
Choose a splitting of the tangent bundle $\tau_E$ of the total space
as $\tau_E\cong \pi^*\tau_B\oplus \Ver$, where $\Ver$ is the bundle
tangent to the fibers $W$, i.e.~choose a connection.  
\begin{definition}\label{folds}  
Let  $\pi: E\to B$ be a smooth bundle as above. For each $z$ in $B$ let 
\[
    i_z : W_z \to E 
\]
be the inclusion of the fiber $W_z := \pi^{-1}(z)$. 
Let $F: E \to B\times I$ be a smooth map. 
The map $F$ is said to be an admissible family of Morse
  functions or \emph{admissible with fold singularities 
with respect to $\pi$} if it satisfies the following conditions:
\begin{enumerate}
\item%[{\bf (i)}]
 The diagram
$$
\begin{diagram}
\setlength{\dgARROWLENGTH}{1.6em}
 \node{}
 %      \arrow[2]{e,t}{}
\node[2]{E}
      \arrow{s,l}{\pi}
      \arrow[2]{e,t}{F}
\node[2]{B\times I}
      \arrow{wsw,b}{p_1}
\\
\node[3]{B}
\end{diagram}
$$
commutes.  Here $p_1: B\times I\to B$ is projection on the first
factor.
\item%[{\bf (ii)}] 
The pre-images $F^{-1}(B\times\{0\})$ and  
$F^{-1}(B\times\{1\})$ coincide with the submanifolds $E_0$ and $E_1$ 
respectively. 
\item%[{\bf (iii)}] 
The set $\Cr(F)\subset E$ of critical points of $F$
  is contained in $E\setminus (E_0\cup E_1)$ and near each critical
  point of $F$ the bundle $\pi$ is equivalent to the trivial bundle
  $\R^k \times \R^{n+1} \stackrel{p_1}{\to} \R^k$ so that with respect
  to these coordinates on $E$ and on $B$ the map $F$ is a standard map
  $\R^{k} \times \R^{n+1} \to \R^k \times \R$ with a fold singularity
  as in Definition \ref{def:standard_fold}
\item%[{\bf (iv)}] 
For each $z\in B$ the restriction
$$
f_z=F|_{W_z}: W_z\to \{z\}\times I \stackrel{p_2}{\longrightarrow} I
$$
is an admissible Morse function as in Subsection
\ref{subsec:review_surgery}.  In particular, its critical points have
indices $\leq n-2$.
\end{enumerate}  
\end{definition}
We assume in addition that the smooth bundle $\pi: E\to B$ is a
Riemannian submersion $\pi: (E,{\mathfrak m}_{E})\to (B,{\mathfrak
  m}_{B})$, see \cite{Besse}. Here we denote by ${\mathfrak m}_{E}$
and ${\mathfrak m}_{B}$ the metrics on $E$ and $B$ corresponding to
the submersion $\pi$. Now let $F: E \to B\times I$ be an admissible
map with fold singularities with respect to $\pi$ as in Definition
\ref{folds}. If the restriction ${\mathfrak m}_{z}$ of the submersion
metric ${\mathfrak m}_{E}$ to each fiber $W_z$, $ z \in B$, is
compatible with the Morse function $f_z=F|_{W_z}$, we say that the
\emph{metric ${\mathfrak m}_{E}$ is compatible with the map $F$.}
\begin{proposition}
  Let $\pi: E\to B$ be a smooth bundle as above and $F: E \to B\times
  I$ be an admissible map with fold singularities with respect to
  $\pi$. Then the bundle $\pi: E\to B$ admits the structure of a
  Riemannian submersion $\pi: (E,{\mathfrak m}_{E})\to (B,{\mathfrak
    m}_{B})$ such that the metric ${\mathfrak m}_{E}$ is compatible
  with the map $F: E \to B\times I$.
\end{proposition}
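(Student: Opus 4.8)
The plan is to build $\mathfrak{m}_E$ locally, using a partition of unity on $B$, and then verify that the two competing requirements — that $\pi$ be a Riemannian submersion, and that the restriction to each fiber be compatible with $f_z$ in the sense of Definition \ref{compat_metric} — can be met simultaneously. First I would choose a connection on $\pi\colon E\to B$, i.e.\ a splitting $\tau_E\cong\pi^*\tau_B\oplus\Ver$ as in the setup preceding Definition \ref{folds}; together with any metric $\mathfrak{m}_B$ on $B$ this makes the horizontal subbundle metrically identified with $\pi^*\tau_B$, so the submersion condition amounts \emph{only} to prescribing the vertical part of $\mathfrak{m}_E$, namely a smooth family of fiber metrics $\mathfrak{m}_z$ on $W_z$, $z\in B$, with no constraint linking different fibers. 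Thus the problem reduces to: construct a smooth section $z\mapsto \mathfrak{m}_z$ of vertical metrics such that each $\mathfrak{m}_z$ is compatible with $f_z=F|_{W_z}$.

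Next I would produce such a family locally over $B$. Cover $B$ by open sets $V_\alpha$ small enough that $E|_{V_\alpha}$ is a trivial bundle $V_\alpha\times W$, and moreover — using condition (3) of Definition \ref{folds} — so that near every critical point the trivialization can be chosen to put $F$ in the standard fold form \eqref{fold}. On each $V_\alpha$ pick a fiberwise metric: away from a neighborhood of $\Cr(F)$ compatibility is an empty condition, so any smooth family will do there; near a fold point, in the standard coordinates of \eqref{fold} the fiberwise Hessian is the constant diagonal form $\diag(-2,\dots,-2,2,\dots,2)$, and one simply takes $\mathfrak{m}_z$ to be (a rescaling of) the standard Euclidean metric $2\sum dx_i^2$ on that coordinate chart, which by inspection satisfies the orthogonality of $T^\pm$ and the normalization $d^2 f|_{T^\pm}=\pm\mathfrak{m}|_{T^\pm}$ of Definition \ref{compat_metric}. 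Patch the local choices over $V_\alpha$ together with a partition of unity $\{\chi_\alpha\}$ subordinate to $\{V_\alpha\}$ to get a global smooth family $\mathfrak{m}_z=\sum_\alpha\chi_\alpha(z)\,\mathfrak{m}_z^{(\alpha)}$ of vertical metrics. Finally, declare $\mathfrak{m}_E$ to be $\pi^*\mathfrak{m}_B$ on horizontal vectors, $\mathfrak{m}_z$ on vertical vectors, and zero for the mixed pairing; by construction $\pi$ is then a Riemannian submersion onto $(B,\mathfrak{m}_B)$.

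The one point that needs care — and which I expect to be the main (mild) obstacle — is that compatibility as in Definition \ref{compat_metric} is \emph{not} a convex condition in an obvious way once one is at a fold point: a convex combination of two metrics each making $d^2f|_{T^+}=\mathfrak{m}|_{T^+}$ again has this property only if the two already agree on $T^+$ and $T^-$ respectively. The resolution is exactly the remark made in the excerpt just after Definition \ref{compat_metric}: for a \emph{fixed} Morse function the space of compatible metrics is convex, because $T^+_pW$ and $T^-_pW$ and the restrictions $d^2 f|_{T^\pm}$ depend only on $f$, and ``$\mathfrak{m}$ is block-diagonal with prescribed blocks on $T^\pm$ and arbitrary on the complement'' is an affine (hence convex) condition; since within a single chart all the local candidates $\mathfrak{m}_z^{(\alpha)}$ restrict near each critical point to metrics compatible with the \emph{same} $f_z$, their partition-of-unity average is again compatible with $f_z$. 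Away from $\Cr(F)$ there is nothing to check, and smoothness in $z$ is automatic from smoothness of $F$ and of the $\chi_\alpha$. This establishes the proposition.
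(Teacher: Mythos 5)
Your proposal is correct and follows essentially the same route as the paper's own (very terse) argument: fix a base metric and a connection, construct fiberwise compatible metrics locally using the local standard form of $F$ near the fold, patch them by a partition of unity on $B$ using the convexity of the set of metrics compatible with a fixed Morse function, and assemble the submersion metric from the horizontal and vertical pieces. Your write-up simply spells out the details (in particular the explicit compatible metric $2\sum dx_i^2$ in standard fold coordinates and the affine nature of the compatibility condition) that the paper delegates to the earlier remark on convexity and to the reference to Besse.
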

\begin{proof}
  One can choose a Riemannian metric ${\mathfrak m}_{B}$ on the base
  $B$, and for each fiber $W_z$ there is a metric ${\mathfrak m}_{z}$
  compatible with the Morse function $f_z=F|_{W_z}$. Using convexity
  of the set of compatible metrics and the local triviality in the definition
  of a family of Morse functions, we can choose this family to depend
  continuously on $z$. Then one
  can choose an integrable distribution (sometimes called
    connection) to construct a submersion
  metric ${\mathfrak m}_{E}$ which is compatible with the map $F: E
  \to B\times I$, see \cite{Besse}.
\end{proof}
Below we assume that the fiber bundle $\pi : E\to B$ is given the
structure of a Riemannian submersion $\pi: (E,{\mathfrak m}_{E})\to
(B,{\mathfrak m}_{B})$ such that the metric ${\mathfrak m}_{E}$ is
compatible with the map $F: E \to B\times I$. %  We denote by
% ${\mathfrak m}_{z}$ the restriction of the metric ${\mathfrak
%   m}_{E}$ on the fiber $W_z$.

Consider the critical set $\Cr(F)\subset E$. It follows from the
definitions that $\Cr(F)$ is a smooth $k$-dimensional submanifold in
$E$, and it splits into a disjoint union of path components
(``folds'')
$$
\Cr(F)= \Sigma_1\sqcup\cdots\sqcup\Sigma_s \, . 
$$
% with
%$\lambda_1\leq \lambda_2\leq\cdots\leq \lambda_s\leq n-2$. 
Furthermore,
it follows that the restriction of the fiber projection
$$
\pi|_{\Sigma_j}:  \Sigma_j\longrightarrow B
$$
is a local diffeomorphism for each $j=1,\ldots,s$. In particular,
$\pi|_{\Sigma_j}$ is a covering map, and if the base $B$
is simply-connected then $\pi|_{\Sigma_j}$ is a diffeomorphism onto its
image.

Since the metric ${\mathfrak m}_{E}$ is a submersion metric, the
structure group of the vector bundle $\Ver\to E$ is reduced to
$O(n+1)$. Furthermore, since the metrics ${\mathfrak m}_z$ are
compatible with the Morse functions $f_z=F|_{W_z}$, the restriction
$\Ver|_{\Sigma_j}$ to a fold $\Sigma_j\subset \Cr(F)$ splits further
orthogonally into the positive and negative eigenspaces of the Hessian
of $F$. Thus the metric ${\mathfrak m}_E$ induces the splitting of the
vector bundle
$$
\Ver|_{\Sigma_j}\cong \Ver_j^-\oplus \Ver_j^+
$$
with structure group $O(p+1)\times O(q+1)$ for each
$\Sigma_j$. Here is the main result of this section:
%\newpage
\begin{theorem}\label{theo:construct_metrics-1a}
  Let $\pi: E\to B$ be a smooth bundle, where the fiber
  $W$ is a compact manifold with boundary $ \p W= M_0\sqcup M_1$, the
  structure group is $\Diff(W,\p W)$ and the base space $B$ is a
  compact smooth simply connected manifold.  Let $F: E\to
  B\times I$ be an admissible 
  map with fold singularities with respect to $\pi$. In addition, we
  assume that the fiber bundle $\pi : E\to B$ is given the structure
  of a Riemannian submersion $\pi: (E,{\mathfrak m}_{E})\to
  (B,{\mathfrak m}_{B})$ such that the metric ${\mathfrak m}_{E}$ is
  compatible with the map $F: E \to B\times I$.  Finally, we assume
  that we are given a smooth map $g_0 : B \to \Riem^+(M_0)$.

  Then there exists a Riemannian metric
  $\bar{g}=\bar{g}(g_0,F,{\mathfrak m}_{E})$ on $E$ such that for each
  $z\in B$ the restriction $\bar{g}(z)=\bar{g}|_{W_z}$
  to the fiber $W_z=\pi^{-1}(z)$ satisfies the
  following properties:
\begin{enumerate}
\item%[{\bf (1)}]
 $\bar{g}(z)$ extends $g_0(z)$;
\item%[{\bf (2)}]
 $\bar{g}(z)$ is a product metric $g_{\nu}(z)+dt^2$
  near $M_{\nu}\subset \p W_z$, $\nu=0,1$;
\item%[{\bf (3)}] 
$\bar{g}(z)$ has positive scalar curvature
  on $W_z$.
\end{enumerate}
\end{theorem}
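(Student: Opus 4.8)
The plan is to upgrade the single-manifold construction of Theorem~\ref{GLcob} to the family setting by working fold-by-fold, exploiting the equivariance established in Lemma~\ref{symmetry}. First I would reduce to the case where $\Cr(F)$ consists of a single fold $\Sigma = \Sigma_1$: since the folds are disjoint compact submanifolds, and the transition regions $U_w$ can be taken arbitrarily small with respect to ${\mathfrak m}_E$ (as noted at the end of the proof of Theorem~\ref{GLcob}), the constructions for different folds have disjoint supports and can be performed independently and then patched. So assume $F$ has one fold $\Sigma$, of fiberwise index $p+1$, with $\pi|_\Sigma : \Sigma \to B$ a diffeomorphism (using that $B$ is simply connected).

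Next I would set up the family of ``Morse coordinates'' around the fold. Over each $z \in B$, the fiberwise Morse function $f_z$ has a single critical point $w_z$, and the compatibility of ${\mathfrak m}_z$ with $f_z$ together with condition~(3) of Definition~\ref{folds} gives, near $w_z$, a chart in which $f_z$ is standard. The key subtlety is that these charts are only well-defined up to the action of $O(p+1) \times O(q+1)$ on the normal directions — this is exactly the structure group of the splitting $\Ver|_\Sigma \cong \Ver_j^- \oplus \Ver_j^+$ recorded just before the theorem. Using the submersion metric ${\mathfrak m}_E$ (the exponential map of ${\mathfrak m}_z$ in the fiber directions, applied to the eigenspaces of the Hessian) I would produce a fiberwise family of such charts, i.e. a neighborhood of $\Sigma$ in $E$ identified with the associated bundle $P \times_{O(p+1)\times O(q+1)} (D^{p+1} \times D^{q+1})$, where $P \to B$ is the $O(p+1)\times O(q+1)$-principal bundle of frames; simultaneously this identifies the trajectory-sphere neighborhood $N_z \subset M_0$ with $S^p \times D^{q+1}$ up to the same group, and — since $E_0 = B \times M_0$ is trivialized — gives a map $B \to $ (embeddings of $S^p \times D^{q+1}$ into $M_0$), well-defined up to $O(p+1)\times O(q+1)$.

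Then I would run the Walsh construction of Theorem~\ref{GLcob} fiberwise. For each $z$, feeding in $g_0(z) \in \Riem^+(M_0)$, the admissible Morse pair $(f_z, {\mathfrak m}_z)$, and the chosen Morse coordinates, one gets a psc-metric $\bar g(z)$ on $W_z$ extending $g_0(z)$ and product near the boundary. Two things must be checked to assemble these into a smooth metric $\bar g$ on $E$: (i) \emph{continuity/smoothness in $z$} — on the complement $W_z \setminus U_z$ the metric is just $g_0(z)|_{M_0 \setminus N_z} + dt^2$ pulled back via the normalized gradient flow, which varies smoothly since $f_z$, ${\mathfrak m}_z$, and $g_0(z)$ do; on $U_z$ the metric is built by the explicit transition recipe of \cite{Walsh}, which the text emphasizes ``depends continuously on the given metric $g_0$'' once the coordinates and parameters $\epsilon, \delta$ are fixed, so choosing $\epsilon, \delta$ globally (allowed by compactness of $B$ and convexity of the relevant parameter spaces) gives smooth dependence; and (ii) \emph{consistency under the coordinate ambiguity} — this is where Lemma~\ref{symmetry} is essential: because the whole local construction is $O(p+1)\times O(q+1)$-equivariant, replacing the Morse chart by another differing by an element of the structure group changes $\bar g(z)$ by the corresponding isometry, so the locally defined metrics descend to a well-defined global fiberwise metric on the associated bundle, hence on the neighborhood of $\Sigma$ in $E$, matching the flow-defined metric outside.

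The main obstacle I expect is precisely point~(ii) combined with the construction of the family of Morse coordinates: one must verify that the $O(p+1)\times O(q+1)$-ambiguity is the \emph{only} ambiguity (so that equivariance suffices), which requires care in how the chart is extracted from ${\mathfrak m}_E$ and the Hessian eigenspace splitting, and one must ensure the chosen charts extend coherently from a neighborhood of the fold out to where they meet the region $W \setminus U$, so that the two descriptions of $\bar g$ agree on the overlap. Establishing smoothness (rather than mere continuity) in $z$ of Walsh's transition metric — tracing through \cite{Walsh} to see the construction is smooth in parameters — is a second, more technical point. Once these are in hand, properties~(1)--(3) for each $\bar g(z)$ are immediate from Theorem~\ref{GLcob} applied fiberwise, finishing the proof.
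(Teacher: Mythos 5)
Your overall strategy — single fold at a time, exponential coordinates from $\mathfrak m_E$ on the splitting $\Ver|_\Sigma\cong\Ver^-\oplus\Ver^+$, then run Theorem~\ref{GLcob} fiberwise and use the $O(p+1)\times O(q+1)$-equivariance of Lemma~\ref{symmetry} to make the locally defined metrics descend — is the same approach as the paper. But there is a genuine gap in the middle step, and it is precisely the technical point the paper spends most of its proof on.

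You write that ``the compatibility of $\mathfrak m_z$ with $f_z$ together with condition~(3) of Definition~\ref{folds} gives, near $w_z$, a chart in which $f_z$ is standard,'' and that the fiberwise exponential map ``produces a fiberwise family of such charts.'' This conflates two different coordinate systems. Condition~(3) of Definition~\ref{folds} guarantees that \emph{some} local trivialization exists in which $F$ is standard, but says nothing about how this relates to the exponential map. Compatibility of $\mathfrak m_z$ with $f_z$ (Definition~\ref{compat_metric}) is only an \emph{infinitesimal} condition: the Hessian of $f_z$ at $w_z$ is diagonal with eigenvalues $\pm1$ in the exponential coordinates, but beyond second order $f_z$ need not equal $c-\rho^2+r^2$. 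So the exponential-map chart is \emph{not} a Morse chart for $F$, and you cannot simply run the Walsh construction in those coordinates. The paper resolves this by explicitly deforming $F$, via a cutoff-function homotopy $F_t=F_{std}+(1-t\phi_\alpha(\sqrt{\rho^2+r^2}))(F-F_{std})$, to a new family $F_1$ that agrees with $F$ away from the fold and equals $F_{std}$ near $\Sigma$; it must then verify (using that $F-F_{std}$ vanishes to third order) that no new critical points are created and that $F_1$ is still a family of Morse functions. Without this deformation step, the equivariance of Lemma~\ref{symmetry} cannot be invoked, because the data you feed into the construction over different $z$ are not related by elements of $O(p+1)\times O(q+1)$; the ambiguity is larger, namely by arbitrary Morse-chart changes, and equivariance for the orthogonal group alone is not enough. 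Your closing paragraph gestures at this worry but does not name it or supply a fix, and that fix is the technical heart of the argument.
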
 
\begin{proof} %(Outline) 
  We assume that $B$ is path-connected. Let $\dim B=k$, $\dim W=
  n+1$. We denote, as above, $\Cr(F)=
  \Sigma_1\sqcup\cdots\sqcup\Sigma_s, $ where the $\Sigma_j$ is a
  path-connected fold.  For a given point $z\in B$, we denote by
  $f_z=F|_{W_z} : W_z \to I$ the corresponding admissible Morse
  function.

  The metric $\bar{g}$ will be constructed by a method which is quite
  similar to that employed in the proof of Theorem \ref{GLcob}. We
  begin by equipping the boundary component $E_0$ with
  the given Riemannian metric ${g}_0$. We choose a
  gradient-like vector field $V$ and use the trajectory flow of $V$ to
  extend $\bar{g}_0$ as a product metric away from the folds
  $\Cr(F)$. Near the folds $\Cr(F)$, some modification is
  necessary. However, roughly speaking, the entire construction goes
  through in such a way that the restriction to any fiber is the
  construction of Theorem \ref{GLcob}.

  We will initially assume that $\Cr(F)$ has exactly one
  path-connected component $\Sigma$. The more general case will follow
  from this by iterated application of the construction. We will denote by $c$
  the critical value
  associated with the fold $\Sigma$, i.e. $p_2\circ F(\Sigma)=c\in
  I$. Let $\epsilon_c>0$ be small. Let $V$ denote the
  normalized gradient vector field associated to $F$ and ${\mathfrak
    m}_{E}$ which is well-defined away from the singularities of $F$.  As
  $F$ has no other critical values, we use $V$ to specify a
  diffeomorphism
\begin{equation*}
\begin{split}
\phi_0:E_0\times [0,c-\epsilon_c]
&\longrightarrow F^{-1}(B\times[0,c-\epsilon_c])\\ 
(w,t)&\longmapsto (h_w(t)),
\end{split}
\end{equation*}
where $h_w$ is the integral curve of $V$ beginning at $w$. In
particular, $p_2\circ F\circ\phi_0$ is the projection onto
$[0,c-\epsilon_c]$. As the bundle $\pi_0:E_0\rightarrow B$ is trivial,
this gives rise to a diffeomorphism 
$$
B\times M_0\times [0,c-\epsilon_c]\cong F^{-1}(B\times[0,c-\epsilon_c]).
$$ 
Let $\bar{g}_{c-\epsilon_c}$ denote the metric obtained on
$F^{-1}(B\times[0,c-\epsilon_c])$ by pulling back, via this
diffeomorphism, the warped product metric $\m_B+g_0+dt^{2}$.  In order
to extend this metric past the fold $\Sigma$, we must adapt our
construction.

Our next goal is to construct a metric $\bar{g}_{c+\epsilon_c}$ on
$F^{-1}(B\times [0,c+\epsilon_c])$, so that on each fiber
$$
\pi^{-1}(y)\cap{F^{-1}(B \times [0,c+\epsilon_c])}
$$
the induced metric has positive scalar curvature and is a product near
the boundary. Fiberwise, this is precisely the situation dealt with in
Theorem \ref{GLcob}. However, performing this over a family of Morse critical
points, we must ensure compatibility of our construction over
the entire family. The main problem is that our construction depends
on the choice of ``Morse coordinates'', i.e.~the diffeomorphism of a
neighborhood of the critical point to $D^{p+1}\times D^{q+1}$. Because
of the non-triviality of the bundle, a global choice of this kind is
in general not possible. We will normalize the situation in such a way
that we choose diffeomorphisms up to precomposition with elements of
$O(p+1)\times O(q+1)$ (in some sense a suitable reduction of the
structure group). We then use Lemma \ref{symmetry}, that the
construction employed is equivariant for this smaller group
$O(p+1)\times O(q+1)$.

Our strategy actually is to use the fiberwise exponential map for
$\mathfrak m_z$ at the critical set as Morse coordinates. Because of
the canonical splitting $\Ver|\Sigma=\Ver^-\oplus \Ver^+$ with
structure group $O(p+1)\times O(q+1)$ this gives coordinates which are
well defined up to an action of $O(p+1)\times O(q+1)$ (the choice of
orthonormal bases in $\Ver^+$ and $\Ver^-$). However, these
coordinates are \emph{not} Morse coordinates for $F$. That the metrics
$\mathfrak m_z$ are compatible with the Morse function $f_z$ only
means that this is the case infinitesimally. We will therefore deform
the given Morse function $F$ to a new Morse function $F_1$ for which
our coordinates are Morse coordinates.

We denote by $D\Ver_{\Sigma}$ the corresponding disk bundle of radius
$\delta$ with respect to the background metric $\m_E$. For each $w\in
\Sigma$, we denote by $D_w(\Ver_\Sigma)$ the fiber of this bundle. If
$\delta$ is sufficiently small, the fiberwise exponential map (and
local orthonormal bases for $\Ver^+$ and $\Ver^-$) define coordinates
$D^{p+1}\times D^{q+1}$ for neighborhoods of the critical point in
each fiber. We use the exponential map to pull back all structures to
$D^{p+1}\times D^{q+1}$ and, abusing notation, denote them in the old
way. In particular, the function $F$ is defined on $D^{p+1}\times
D^{q+1}$.

Let $\rho$ and $r$ denote the distance to the origin in $D^{p+1}$ and
$D^{q+1}$, respectively. Then $\rho^2$ and $r^2$ are smooth functions
on the image under the fiberwise exponential map of
$D(\Ver_\Sigma)$. Moreover, define $F_{std}\colon D\Ver_\Sigma\to\R$
by $F_{std}:=c-\rho^2+r^2$. The compatibility condition on $F$ and the
Taylor expansion theorem imply that $F-F_{std} =
O(\sqrt{r^2+\rho^2}^3)$, i.e.~$F-F_{std}$ is cubic in the $\mathfrak
m_z$-distance to the origin.

Choose a sufficiently small $\alpha>0$ and a smooth cutoff function
$\phi_\alpha\colon\R\to [0,1]$ with
\begin{enumerate}\item 
  $\phi_\alpha(s)=1$ for $s<\alpha$
  \item $\phi_\alpha(s)=0$ for $s>2\alpha$
  \item $\abs{\phi'(s)}\le 10/\alpha$ $\forall s\in\R$.
\end{enumerate}
Then $F_t:= F_{std} + (1-t\phi_\alpha(\sqrt{\rho^2+r^2}))(F-F_{std})$ provides
a homotopy between $F=F_0$ and $F_1$ of families of Morse functions with the
following properties: 
\begin{enumerate}
\item%[{\bf (a)}] 
$\Cr(F_t)=\Cr(F)$ $\forall t\in [0,1]$; 
\item%[{\bf (b)}]
 $F_t$ coincides with $F$ outside of a tubular neighborhood
of  $\Sigma$ $\forall t\in [0,1]$;
\item%[{\bf (c)}] 
$F_1=F_{std}$ on a sufficiently small neighborhood of
  the fold $\Sigma$ in $D\Ver_\Sigma$.
\end{enumerate} 
\begin{figure}[htbp]
\begin{picture}(0,0)%
\includegraphics{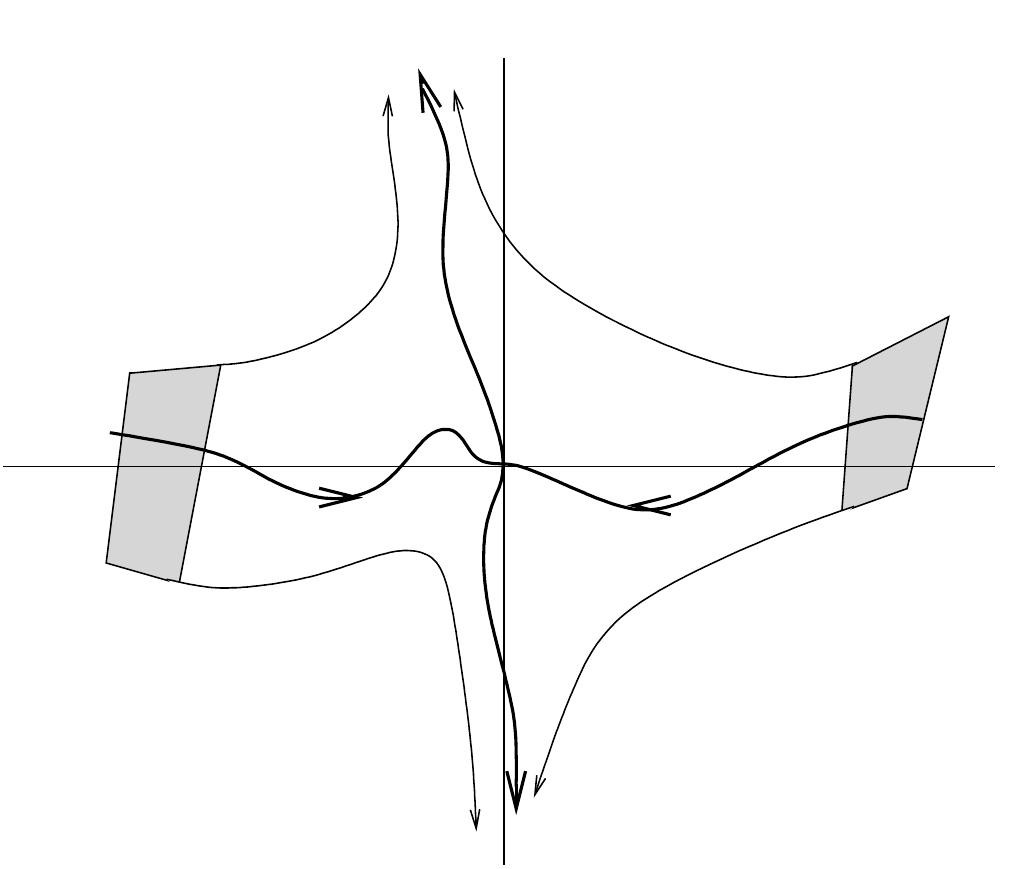}%[height=3.3in]
\end{picture}%
\setlength{\unitlength}{3947sp}%
\begingroup\makeatletter\ifx\SetFigFont\undefined%
\gdef\SetFigFont#1#2#3#4#5{%
  \reset@font\fontsize{#1}{#2pt}%
  \fontfamily{#3}\fontseries{#4}\fontshape{#5}%
  \selectfont}%
\fi\endgroup%
\begin{picture}(4921,4157)(1995,-6514)
  \put(3914,-4161){\makebox(0,0)[lb]{\smash{{\SetFigFont{10}{12}{\rmdefault}{\mddefault}{\updefault}{\color[rgb]{0,0,0}$D_{w}^{q+1}$}%
        }}}}
  \put(3726,-3349){\makebox(0,0)[lb]{\smash{{\SetFigFont{10}{12}{\rmdefault}{\mddefault}{\updefault}{\color[rgb]{0,0,0}$t$}%
        }}}}
  \put(3226,-4886){\makebox(0,0)[lb]{\smash{{\SetFigFont{10}{12}{\rmdefault}{\mddefault}{\updefault}{\color[rgb]{0,0,0}$D_{w}^{p+1}$}%
        }}}}
  \put(4426,-2511){\makebox(0,0)[lb]{\smash{{\SetFigFont{10}{12}{\rmdefault}{\mddefault}{\updefault}{\color[rgb]{0,0,0}$D\Ver_{w}^{+}$}%
        }}}}
  \put(6901,-4624){\makebox(0,0)[lb]{\smash{{\SetFigFont{10}{12}{\rmdefault}{\mddefault}{\updefault}{\color[rgb]{0,0,0}$D{\Ver}_{w}^{-}$}%
        }}}}
\end{picture}%
\caption{The images of the trajectory disks $D_{w}^{p+1}$ and
  $D_{w}^{q+1}$ in $D_w\Ver(\Sigma)$ after application of the inverse
  exponential map}
\label{perturb}
\end{figure} 
The second and the third condition are evident. For the first, we have
to check that we did not introduce new critical points. Now the
gradient of $F_{std}$ is easily calculated and its norm at $x$ is
equal to the norm of $x$. On the other hand, the gradient of
$$(1-t\phi(\sqrt{r^2+\rho^2}))(F-F_{std})$$ 
has two summands:
\begin{enumerate}
\item $(1-t\phi(\sqrt{r^2+\rho^2}))\nabla(F-F')$ where $(1-t\phi)$ is
  bounded and $\nabla(F-F')$ is quadratic in the distance to the
  origin (as $F-F'$ has a Taylor expansion which starts with cubic
  terms).
\item $t\phi'(\sqrt{r^2+\rho^2})\nabla(\sqrt{r^2+\rho^2})(F-F')$. This
  vanishes identically if $r^2+\rho^2\le \alpha^2$, and is bounded by
  $10 (F-F')/\alpha0\le 10(F-F')/\sqrt{r^2+\rho^2} $ if $r^2+\rho^2\ge
  \alpha^2$ (here we use that the gradient of the distance to the
  origin $\sqrt{r^2+\rho^2}$ has norm $1$). Since $F-F'$ is cubic in
  $\sqrt{r^2+\rho^2}$, the whole expression is quadratic.
\end{enumerate}
It follows that, if $\alpha$ is chosen small enough (there is a
uniform bound because we deal with a compact family, so we find
uniform bounds for the implicit constants in the above estimates), the
gradient of $F_t$ vanishes exactly at the origin. Near the origin, by
the choice of $\phi$, $F_t=tF_{std}+(1-t)F$. Because the Hessians of
$F$ and of $F_{std}$ are identical at the origin, the Hessian of $F_t$
also coincides with the Hessian of $F_{std}$, in particular $F_t$ is a
family of Morse functions. To find the required local Morse
coordinates, we can invoke Igusa's \cite[Theorem 1.4]{Igusa-book}.

Thus we can assume that the map $F$ is standard near the fold
$\Sigma$, i.e. $F=F_1$ in the first place, and from now on we will do so.
\begin{figure}[htbp]
\begin{picture}(0,0)%
\includegraphics{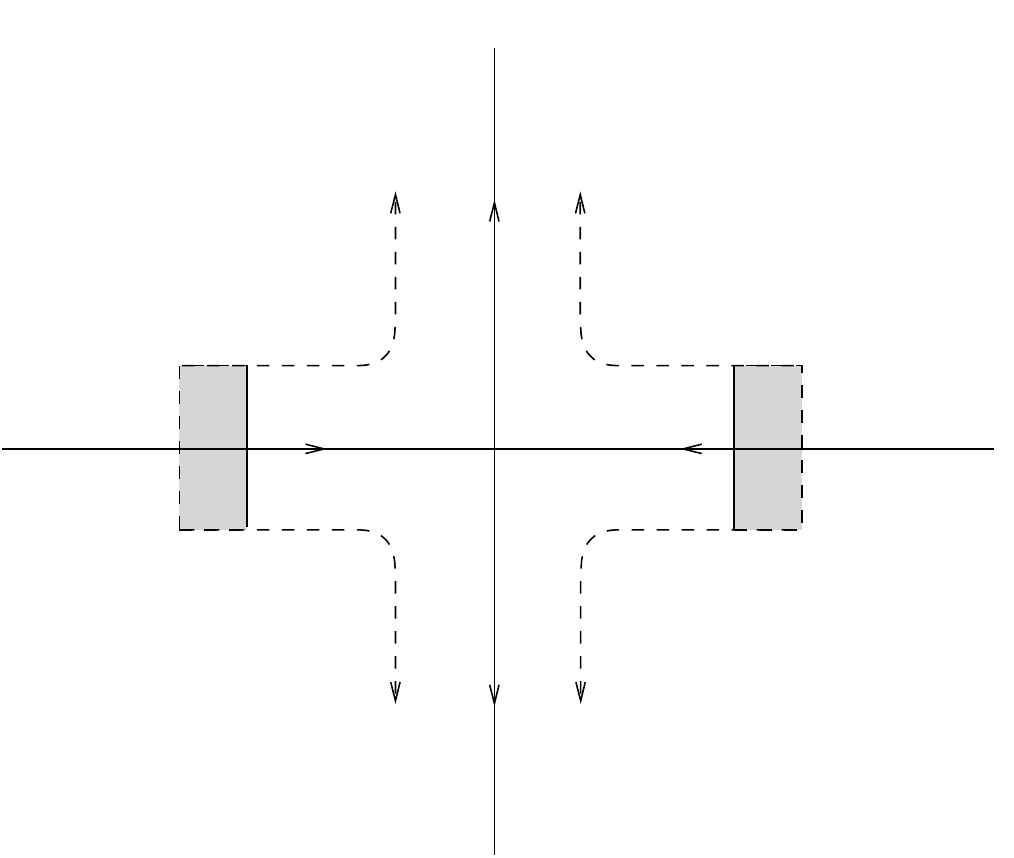}%
\end{picture}%
\setlength{\unitlength}{3947sp}%
\begingroup\makeatletter\ifx\SetFigFont\undefined%
\gdef\SetFigFont#1#2#3#4#5{%
  \reset@font\fontsize{#1}{#2pt}%
  \fontfamily{#3}\fontseries{#4}\fontshape{#5}%
  \selectfont}%
\fi\endgroup%
\begin{picture}(4864,3716)(2102,-6973)
\put(6426,-5236){\makebox(0,0)[lb]{\smash{{\SetFigFont{10}{12}{\rmdefault}{\mddefault}{\updefault}{\color[rgb]{0,0,0}$D\Ver_w^{-}=D_{w}^{p+1}$}%
}}}}
\put(4551,-3499){\makebox(0,0)[lb]{\smash{{\SetFigFont{10}{12}{\rmdefault}{\mddefault}{\updefault}{\color[rgb]{0,0,0}$D\Ver_w^{+}=D_{w}^{q+1}$}%
}}}}
\end{picture}%
\caption{The shaded region denotes the region of the fiber
  $D_w\Ver(F)$ on which the induced metric is defined.}
\label{expdisk}
\end{figure}
Now, via the fiberwise exponential map for $\mathfrak m_{\pi(w)}$, for each
$w\in \Sigma$ we can specify a neighborhood $U_w\subset
\pi^{-1}(\pi(w))$ containing the point $w$ and of the type described in
the proof of Theorem \ref{GLcob}. In Figures \ref{perturb} and
\ref{expdisk}, the image of this region under the inverse exponential
map, before and after the above adjustment of $F$, is shown. For each
$w\in \Sigma$, replace the fiber $D_{w}\Ver_{\Sigma}$ with the image
under the inverse exponential map of $U_w$. Abusing notation we will
retain the name $D\Ver_{\Sigma}$ for this bundle, the fibers of which
should be thought of as the cross-shaped region described in
Fig.~\ref{expdisk}.

The structure group of this bundle is still $O(p+1)\times O(q+1)$. The
metric induced by $\bar{g}_{c-\epsilon_c}$ is defined on a subbundle
with fibers diffeomorphic to $S^{p}\times D^{q+1}\times I$, see
Figure~\ref{expdisk}. On each fiber we now perform the construction
from Theorem \ref{GLcob}. The fact that we adjusted $F$ to make the
trajectories standard on the fiber disk guarantees consistency of the
construction. On each fiber there is a splitting into positive and
negative eigenspaces over which we will perform our construction. We
must however, choose a pair of orthonormal bases for the negative and
positive eigenspaces of that fiber in order to appropriately identify
the fiber with Euclidean space. In order to guarantee consistency we
must ensure that our construction is independent of these choices. But
this follows from Lemma \ref{symmetry}.

Extending the metric fiberwise in the manner of Theorem \ref{GLcob}
and pulling back via the exponential map, gives a smooth family of
fiber metrics, which, with respect to some integrable distribution $H$
and the base metric $\m_B$, combine to the desired submersion metric
on $F^{-1}(B\times[0,c+\delta_c])$.
\end{proof}
  \begin{remark}
    With some little extra care it should be possible to remove the condition
    that $B$ is simply connected in Theorem
    \ref{theo:construct_metrics-1a}. However, we are only interested in the case
    $B=S^n$ with $n>1$ so that, for our purpose, we can stick to the simpler version as
    stated. 
  \end{remark}

\section{Metrics of positive scalar curvature on Hatcher's examples}
\label{sec:Hatchers_examples}
The work of Goette \cite[Section 5.b]{Goette} shows that Hatcher's
examples can be given the structure which is described in Definition
\ref{folds}. The construction of the Hatcher bundles $D^n \to E \to
S^k$ is explained in some detail in \cite{Goette} and will not be
repeated here. Most important for our discussion is the fact that each
of these bundles comes with an admissible family $F$ of Morse
functions as indicated in the following commutative diagram:
\begin{equation}\label{E0}
\begin{diagram}
\setlength{\dgARROWLENGTH}{1.6em}
\node{D^n_z}
      \arrow[2]{e,t}{i_z}
\node[2]{E}
      \arrow{s,l}{\pi}
      \arrow[2]{e,t}{F=(\phi,f)}
\node[2]{S^k\times [0,1/2]}
      \arrow{wsw,b}{p_1}
\\
\node[3]{S^k}
\end{diagram}
\end{equation}
We follow the description given in \cite{Goette}. Each
$f_z:=f|_{E_z}\colon D^n_z \to [0,1/2]$ has three critical points
$p_z^{(0)}$, $p_z^{(1)}$ and $p_z^{(2)}$.  In particular, the points
$p_z^{(0)}$ form a unique fiberwise minimum of the Morse functions
$f_z$ with value $0$, and $F^{-1}(S^k\times \{0\})$ has a neighborhood
$F^{-1}(S^k\times [0,1/8])$ which (as a smooth bundle) is
diffeomorphic to $D^n \times S^k$. Near the value $1/2$, the inverse
image $F^{-1}(S^k\times \{1/2\})$ has a neighborhood diffeomorphic to
$(S^{n-1}\times I) \times S^k$.  We now consider the upside-down copy
of the bundle \eqref{E0}:
\begin{equation}\label{E*}
\begin{diagram}
\setlength{\dgARROWLENGTH}{1.6em}
\node{D^n_z}
      \arrow[2]{e,t}{i_z}
\node[2]{E^*}
      \arrow{s,l}{\pi}
      \arrow[2]{e,t}{F^*}
\node[2]{S^k\times [1/2,1]}
      \arrow{wsw,b}{p_1}
\\
\node[3]{S^k}
\end{diagram}
\end{equation}
Here $E^*:=E$ and
$F^*(e):= (\pi(e),1-f_{\pi(e)}(e))$, i.e. $f^*_z=1-f_z$, where we write
$F=(\phi,f)$. It follows 
that each $f_z^*\colon D^n \to [1/2,1]$ has three
critical points $p_z^{(*0)}$, $p_z^{(*1)}$ and $p_z^{(*2)}$.  In
particular, the points $p_z^{(*0)}$ form a unique fiberwise maximum of
the Morse functions $f_z^*$ with value $1$, and $(F^*)^{-1}(S^k\times
\{0\})$ has a neighborhood $(F^*){-1}(S^k\times [7/8,1])$ 
which (as a smooth bundle) is again diffeomorphic
to $D^n \times S^k$. Near the value $1/2$, the inverse image
$(F^*)^{-1}(S^k\times \{1/2\})$ again has a neighborhood  diffeomorphic
to $(S^{n-1}\times I) \times S^k$. 

By cutting out the neighborhood $D^n\times S^k$ of the
fiberwise minima of $F$, we obtain a smooth bundle
\begin{equation}\label{E1}
\begin{diagram}
\setlength{\dgARROWLENGTH}{1.6em}
\node{(S^{n-1}\times [1/8,1/2])_z}
      \arrow[2]{e,t}{i_z}
\node[2]{E_1}
      \arrow{s,l}{\pi}
      \arrow[2]{e,t}{F_1}
\node[2]{S^k\times [1/8,1/2]}
      \arrow{wsw,b}{p_1}
\\
\node[3]{S^k}
\end{diagram}
\end{equation}
where $E_1:=E\setminus F^{-1}(S^k\times [0,1/8))$,  $F_1:=F|_{E_1}$,
and the spheres $S^{n-1}$ in the product
$$
S^{n-1}\times S^k = F^{-1}(S^k\times \{1/8\})
$$
are given the standard metric $g_0$ of fixed (but arbitrary) radius $b$
independent 
of $z\in S^k$.  The bundle (\ref{E1}) satisfies all the assumptions of
Theorem \ref{theo:construct_metrics-1a}, and we obtain psc-metrics
$\overline{g}_z$ on each fiber $(S^{n-1}\times [1/8,1/2])_z$ with a
product-metric near the boundary. In particular, this gives a family
of metrics $(g_1)_z$ on the spheres $S^{n-1}_z\times \{1/2\}$. 

Now we apply the same construction to the upside-down copy $E^*$ to
obtain a smooth bundle $E^*_1$ with
fibers % $(S^{n-1}\times [1/8,1/2])_z$
% turned to the fibers
$(S^{n-1}\times [1/2,7/8])_z$. To make sure that the metrics match, we
set $\overline{g}_z^*:=\overline{g}_z$, i.e.~we use the same metric
upside-down.

Because our construction provides metrics which are products near the
boundary, we can glue together the bundles $E_1$ and $E_1^*$ to form a
bundle $\widetilde E \to S^k $ with fiber $(S^{n-1}\times
[1/8,7/8])_z$ together with a smooth family of psc-metrics.  We notice
that the restriction of the bundle $\widetilde E$ to the boundaries
$$
(S^{n-1}\times \{1/8,7/8\})_z=\p (S^{n-1}\times [1/8,7/8])_z
$$
is trivial by construction, and the spheres $S^{n-1}\times
\{1/8,7/8\}$ are given the standard metric independent of the fiber.
Thus we can glue the fiberwise caps $(D^{n}_0\sqcup D_1^n)_z$
to the bundle $\widetilde E \to S^k$ by identifying
$$
\begin{array}{l}
(D^{n}_0)_z  \supset S^{n-1}_z = (S^{n-1}  \times \{1/8\})_z \ ,
\\
\\
(D^{n}_1)_z  \supset S^{n-1}_z = (S^{n-1}  \times \{7/8\})_z \ .
\end{array}
$$
Then we define the torpedo metrics $g_{\tor}(r)$ on the disks
$(D^{n}_0)_z$ and $(D^{n}_0)_z$ such that they match the chosen
standard metric of radius $b$ on the boundary spheres.  We denote the
resulting metric on the fiber sphere $S^n_z$ by $\overline{g}_z$.  Let
$\overline{E} \to S^k$ be the resulting fiber bundle with fiber $S^n$.

Let us investigate what we have achieved: for each $z\in S^k$ we get a
psc-metric on the fiber $S^n_z$ over $z$. This is a manifold
diffeomorphic to $S^n$, but not with a given diffeomorphism.  Hence
this metric defines a point in the moduli space of pcs-metrics on
$S^n$. Finally, there is a base point $z_0 \in S^n$ together with a
fixed neighborhood on which all these diffeomorphisms
restrict to the identity. This implies that in fact we get an element
in $\pi_k{\mathcal M}_{x_0}^+(S^n)$.  The map
$$
\iota: {\mathcal M}_{x_0}^+(S^n) \longrightarrow {\mathcal M}_{x_0}(S^n)  = 
B\Diff_{x_0}(S^n)
$$ 
forgets the fiberwise Riemannian metrics and just remembers the
structure of $\overline E$ as a smooth bundle.  Because for odd $n$
the generators of $\pi_k \mathcal{M}_{x_0}^+(S^n) \otimes \Q$ can in a
stable range be represented by classifying maps of Hatcher bundles
\cite{Bok, Igusa-book, Goette} we have proved our first main result,
Theorem \ref{thm:posit}.

To prove our second main result, Theorem
\ref{thm:posit_general_manifold}, for a general manifold $M$, we use
the above fiber bundles to form non-trivial bundles by taking a
fiberwise connected sum $M\#S^n$.

Let $M$ be a smooth manifold with a base point $x_0$. 
We assume that $M$ is equipped with a psc-metric $h$. We fix a
disk $D_0^n\subset M$ of small radius centered at $x_0$. We may always
deform the metric $h$ near $x_0$ such that its restriction on $D_0^n$
is a torpedo metric $g_{\tor}(r)$ (use e.g.~Lemma \ref{lem:isotopy}
and thinking of the given disk as one half of a tubular neighborhood
of an embedded $S^0$). Thus we will assume that the metric $h$ already
has this property. 

On the other hand, we consider the bundle $\overline{E} 
\to S^k$ with with psc-metrics $\overline{g_z}$ on the fibers 
$S^n_z$, $z\in S^k$, as constructed before.  We notice that the metrics $\overline{g_z}$
are chosen in such way that their restrictions to the disks
$(D^{n}_0)_z$ and $(D^{n}_1)_z$ are torpedo metrics (with chosen
parameter). Let
$$
\widetilde{D^n}_z = S^n_z\setminus (D^{n}_1)_z .
$$
This is a disk together with the metric $\widetilde{g}_z = \overline{g}_z|_{\widetilde{D^n}_z}$ which is a product-metric $g_0+dt^2$ near the
boundary $S^{n-1}_z\subset \widetilde{D^n}_z$. Now for each $z\in S^k$ we
define the Riemannian manifold
$$
M_z = M\# (S^n)_z= (M \setminus D_0) \cup_{S^{n-1}_z} \widetilde{D^n}_z
$$
equipped with the metric $\widetilde{h}_z$ so that
$$
\widetilde{h}_z|_{M \setminus D_0}= h|_{M \setminus D_0}, \ \ \
\widetilde{h}_z|_{D^n_z}= \widetilde{g}_z.
$$
This defines a smooth fiber bundle 
$$
\widetilde E =  (M \times S^k)\# E \longrightarrow S^k,
$$
where $(M \times S^k)\# E$ is the total space of the fiber-wise
connected sum as we just described. It follows from the additivity
property \cite[Section 3.1]{Igusa-new} that the higher
Franz-Reidemeister torsion of the fiber bundle $\widetilde E \to S^k$
is a non-zero class in $H^k(S^k;\Q)$. This implies that the
classifying map
\[
    S^k \to B \Diff_{x_0}(M) = \mathcal{M}_{x_0}(M) 
\]
of this bundle defines a non-zero element in
$\pi_k(\mathcal{M}_{x_0}(M); [\widetilde h])$.  Since we have
constructed psc-metrics on the fibers $M_z$, this non-zero element can
be lifted to $\pi_k({\mathcal M}_{x_0}^+(M),[\widetilde h])$.  This
finishes the proof of Theorem \ref{thm:posit_general_manifold}.

\section{Homotopy type of the usual psc-moduli space} 
\label{sec:asymmetric_manifold}
In this section, we show that for a suitable choice of $M$ as
  in Theorem \ref{thm:posit_general_manifold}, the map
  $\mathcal{M}_{x_0}^+(M)\to \mathcal{M}^+(M)$ is non-trivial on $\pi_k$.

For a closed smooth manifold $M$ let  $A_H(M)$ be the image of 
the canonical map $\Diff(M) \to \Aut(H_*(M;\Q))$.  
\begin{lemma} \label{manifold} For any $N \geq 0$ there is is a closed
  smooth orientable manifold $M$ of dimension $n$ with the following
  properties.
\begin{enumerate}
   \item%[{\bf (a)}]
  $n$ is odd and $n \geq N$. 
    \item%[{\bf (b)}] 
 $M$ carries a psc-metric. 
   \item%[{\bf (c)}] 
 Each  $S^1$-action on $M$ is trivial. 
   \item%[{\bf (d)}] 
 $A_H(M)$ is finite.
   \item%[{\bf (e)}]  
Each diffeomorphism of $M$ is orientation preserving. 
 \end{enumerate}
\end{lemma}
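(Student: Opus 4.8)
The plan is to construct $M$ as a closed \emph{simply connected} smooth manifold. Simple connectivity gives orientability at once and, more importantly, lets us decouple the positive scalar curvature condition~(2) from the rigidity conditions~(3)--(5): as recalled in the introduction, by Gromov and Lawson a closed simply connected smooth manifold of dimension $\geq 5$ which is not spin automatically admits a psc-metric. I would therefore arrange that the manifold built is non-spin --- for instance by building into it a class in $H_2(M;\mathbb{Z}/2)$ on which $w_2$ does not vanish, such as the $\mathbb{Z}/2$-reduction of the homology class of a suitably embedded $2$-sphere with non-trivial normal data. This is the device that sidesteps the usual tension between positive scalar curvature and ``rigidity'': no aspherical manifold is used (on which a psc-metric is impossible), only a simply connected non-spin one, where a psc-metric is free of charge. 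This settles~(2).

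For condition~(3), recall that a non-trivial smooth $S^1$-action on a closed manifold yields, by averaging an arbitrary Riemannian metric over the action, a Riemannian metric whose isometry group is a compact Lie group of positive dimension, and conversely that a positive-dimensional compact group of isometries contains a circle; hence $M$ satisfies~(3) exactly when every Riemannian metric on $M$ has finite isometry group, i.e.\ when $M$ is \emph{asymmetric}. The key input is that asymmetric closed simply connected smooth manifolds exist in every sufficiently large dimension; these are produced by realising suitably rigid rational homotopy types by smooth manifolds, in the spirit of the known constructions of asymmetric manifolds (Kreck, Puppe, and others). The realisation procedure carries enough freedom to impose simultaneously that (i) $M$ is non-spin, as needed for~(2); (ii) the graded-algebra automorphisms of $H^*(M;\mathbb{Q})$ induced by self-maps of $M$ form a finite group --- one may even prescribe the cohomology ring so that every graded-algebra automorphism has finite order --- whence $A_H(M)$ is finite, since the $\Diff(M)$-action on $H_*(M;\mathbb{Q})$ factors through this group; this gives~(4); and (iii) no self-homotopy-equivalence of $M$ reverses the fundamental class, i.e.\ $M$ is chiral, which gives~(5) because an orientation-reversing diffeomorphism would furnish such an equivalence. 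Taking $n=\dim M$ odd and at least $N$ is then no constraint on the construction and yields~(1).

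The step I expect to be the main obstacle is exactly this simultaneous realisation: one must verify that the cohomological and rational-homotopy-theoretic rigidity that kills all continuous symmetries survives the modifications forcing non-spinness and chirality. Since non-spinness, chirality and the rigidity statements are all conditions on the stable tangential type and the rational homotopy type of $M$, they can in principle be prescribed together; the delicate part is the bookkeeping, namely to place the class detecting $w_2\neq 0$ and the classes witnessing chirality in degrees where they do not generate new automorphisms of the cohomology ring, so that both the finiteness of $A_H(M)$ and the absence of an $S^1$-symmetry are preserved. Once this is done, properties~(1)--(5) all hold.
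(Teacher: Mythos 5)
Your proposal takes a genuinely different route from the paper, but it has a substantial gap that you yourself flag: you never actually exhibit the manifold. The paper constructs $M = B^n \times N^{4k}$, where $B^n$ is a closed hyperbolic manifold with $\Out(\pi_1 B^n) = 1$ (Belolipetsky--Lubotzky) and $N^{4k} = P \# Q$ is a simply connected non-spin manifold built from a Moore space together with an $E_8$-plumbing; in particular $M$ is \emph{not} simply connected. The psc-metric comes from the Gromov--Lawson theorem applied to the factor $N$, the absence of an $S^1$-action comes from a higher $\hat A$-genus obstruction relying on the fundamental group of $B$ being torsion-free and without $\mathbb Z^2$-subgroups (a theorem of Herrera--Herrera), the finiteness of $A_H(M)$ comes from the rigidity of the aspherical factor combined with the positive-definiteness of the $E_8$-lattice in $H^{2k}(Q;\mathbb Z)$, and orientation-preservation comes from the invariance of $p_k(\widetilde M)$ and of the fundamental class of $B$. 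Your remark that the paper cannot use an aspherical manifold ``on which a psc-metric is impossible'' misreads the construction: only one factor is aspherical, and the product does admit a psc-metric.

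Your proposal instead asks for a simply connected $M$ that is simultaneously asymmetric, non-spin, chiral, with finite $A_H$, in every sufficiently large odd dimension, to be realised via rational homotopy rigidity (Kreck, Puppe). This is an interesting alternative mechanism, and the ``asymmetric $\Leftrightarrow$ no nontrivial $S^1$-action'' equivalence is correct. But the claim that ``the realisation procedure carries enough freedom to impose simultaneously'' non-spinness, chirality, and the cohomological rigidity needed for (3) and (4), in every large odd dimension, is precisely the content of the lemma and is asserted without argument. As you note yourself, this simultaneous realisation ``is the main obstacle''; but a sketch that defers the main obstacle is not yet a proof. In particular, the known simply connected asymmetric constructions must be checked to be compatible with non-spinness and chirality simultaneously, and to exist in all large odd dimensions — none of which is shown. (A smaller point: ``every graded-algebra automorphism of $H^*(M;\mathbb Q)$ has finite order'' does not by itself imply the automorphism group is finite; one should either argue the group is finite directly, or invoke the fact that a torsion subgroup of $GL_n(\mathbb Q)$ has bounded exponent by Minkowski and is therefore finite by Burnside--Schur.) The paper's route avoids all of this by an explicit and self-contained construction.
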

Before we explain the construction of $M$, we show how Theorem
\ref{thm:usual_moduli} follows.

Let $d > 0$ be given and choose $N$ so that Theorem \ref{thm:posit}
holds for all $n \geq N$ and all $k = 4q \leq d$.  For $k = 4q \leq d$
we consider the fibration
\[
     M \to  E \to S^{k} 
\]
constructed at the end of Section \ref{sec:Hatchers_examples}. 
By construction this fibration is classified by a map $f : S^{k} \to
BG$, where 
$
G := \Torr(M) \cap \Diff_{x_0}(M).
$  
Because the higher Franz-Reidemeister torsion of this bundle is a
non-zero element in $H^{k}(S^{k} ; \Q)$, the fundamental class of
$S^k$ is mapped to a non-zero element in $H_{k}(BG;\Q)$ and then
further to a non-zero element $c \in H_k(B\Torr(M); \Q)$.

Let $\phi : M \to M$ be a diffeomorphism. Then $\phi$ is orientation
preserving by assumption.  Because $\Torr(M)$ is normal in $\Diff(M)$,
the map $\phi$ induces a map $\overline{\phi} : B \Torr(M) \to B
\Torr(M)$, where we think of $B\Torr(M)$ as $E \Diff(M) / \Torr(M)$.
\begin{lemma} $\overline{\phi}_*(c) = c$. 
\end{lemma}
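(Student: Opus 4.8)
The plan is to show that the class $c \in H_k(B\Torr(M);\Q)$ is actually invariant under the action on homology of the entire mapping class group of $M$, which forces $\overline\phi_*(c) = c$. The key structural fact is that $\Torr(M)$ sits inside $\Diff(M)$ as a normal subgroup with quotient $A_H(M) = \Diff(M)/\Torr(M)$ (by definition of $\Torr(M)$ as the kernel of $\Diff(M)\to\Aut(H_*(M;\Q))$, modulo the mild point that we should really use the full group $\pi_0\Diff(M)$ acting), and this quotient is \emph{finite} by property (4) of Lemma \ref{manifold}. Conjugation by $\phi$ gives the self-map $\overline\phi$ of $B\Torr(M)$, and as $\phi$ ranges over $\Diff(M)$ these assemble into an action of the finite group $A_H(M)$ on $H_*(B\Torr(M);\Q)$ (an inner diffeomorphism acts trivially up to homotopy, so the action factors through $\pi_0$ and then through $A_H(M)$).

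First I would recall why $c$ is a well-defined class independent of choices: $c$ is the image of the fundamental class $[S^k]$ under $H_k(S^k;\Q)\to H_k(BG;\Q)\to H_k(B\Torr(M);\Q)$, where $G = \Torr(M)\cap\Diff_{x_0}(M)$, and its non-triviality comes from the non-vanishing higher Franz–Reidemeister torsion $\tau_{2q}\in H^{4q}(B\Torr(M);\Q)$ paired against it — this pairing is intrinsic to $B\Torr(M)$. Next I would observe that the higher torsion classes $\tau_{2q}$ are \emph{natural} characteristic classes: $\overline\phi$ classifies the bundle obtained from the universal $\Torr(M)$-bundle by regluing via $\phi$, and higher Franz–Reidemeister torsion is a diffeomorphism invariant of smooth bundles, so $\overline\phi^*\tau_{2q} = \tau_{2q}$. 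Hence for the dual statement: $\langle \tau_{2q}, \overline\phi_*(c)\rangle = \langle \overline\phi^*\tau_{2q}, c\rangle = \langle \tau_{2q}, c\rangle \neq 0$, so $\overline\phi_*(c)\neq 0$.

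The remaining point — and the real content — is upgrading "$\overline\phi_*(c)\neq 0$" to "$\overline\phi_*(c) = c$". Here I would average: since $A_H(M)$ is finite, the element $c' := \frac{1}{|A_H(M)|}\sum_{\psi}\overline\psi_*(c)$ is $A_H(M)$-invariant, and it suffices to know that $c$ already lies in the invariant subspace. The cleanest route is to note that the Hatcher bundle construction, and the fiberwise connected sum $(M\times S^k)\# E$ producing the bundle classified by $c$, can be taken to be \emph{equivariant} or at least that the homology class $c$ is pinned down uniquely in $H_k(B\Torr(M);\Q)$ up to the finite-index ambiguity — and then the pairing argument shows $\langle\tau_{2q},\overline\phi_*(c) - c\rangle = 0$ for the \emph{relevant} $\tau_{2q}$, while in degree $k$ the torsion class $\tau_{k/2}$ pairs non-degenerately on the span of these Hatcher classes. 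Concretely, the Hatcher classes in $H_k(B\Diff_{x_0}(S^n)\otimes\Q)$ span a one-dimensional space in the relevant degree (Theorem \ref{thm:Farrell-Hsiang}), detected by a single torsion class, so any two classes with the same non-zero torsion pairing and lying in the image of $\pi_k$ agree; pushing forward along the connected-sum and the map to $B\Torr(M)$, the image $c$ is similarly determined, and $\overline\phi_*$ preserves all this structure.

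The main obstacle I anticipate is precisely this last step: ruling out that $\overline\phi_*$ could move $c$ to a \emph{different} class with the same torsion pairing. This requires either (a) knowing the relevant rational homology of $B\Torr(M)$ (equivalently $B\Diff_{x_0}(S^n)$ after the connected-sum identification) is one-dimensional in degree $k$ so there is no room to move — which follows from Farrell–Hsiang (Theorem \ref{thm:Farrell-Hsiang}) together with the additivity of higher torsion \cite[Section 3]{Igusa-new} identifying the $M\# S^n$ class with the $S^n$ class — or (b) a genuine equivariance statement for Hatcher's construction. I would pursue route (a): after stabilizing, the space of candidate classes is spanned by Hatcher classes, $\tau_{k/2}$ restricts to a non-zero (hence injective on this line) functional, and $\overline\phi^*\tau_{k/2} = \tau_{k/2}$ together with $\overline\phi_*$ being a linear automorphism of a one-dimensional space on which the functional is preserved forces $\overline\phi_* = \id$ there, so $\overline\phi_*(c) = c$.
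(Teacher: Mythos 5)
Your proposal takes a fundamentally different tack from the paper, and it has a genuine gap at its crux. The pairing argument you set up is fine as far as it goes: naturality of the higher Franz--Reidemeister torsion under fiberwise diffeomorphisms (together with orientation preservation, which is item (5) of Lemma \ref{manifold}) gives $\overline{\phi}^*\tau_{2q} = \tau_{2q}$, hence $\langle \tau_{2q}, \overline{\phi}_*(c)\rangle = \langle \tau_{2q}, c\rangle \neq 0$, so $\overline{\phi}_*(c)\neq 0$. But the upgrade to $\overline{\phi}_*(c)=c$ rests on the claim that the relevant part of $H_k(B\Torr(M);\Q)$ is one-dimensional, ``equivalently $B\Diff_{x_0}(S^n)$ after the connected-sum identification.'' That equivalence does not exist. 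Farrell--Hsiang (Theorem \ref{thm:Farrell-Hsiang}) computes $\pi_*B\Diff_{x_0}(S^n)\otimes\Q$; the connected-sum construction only supplies a map from (a classifying space related to) $\Diff(D^n,\partial D^n)$ into $B\Torr(M)$, not an isomorphism on rational homology. For the manifold $M$ built in Section \ref{sec:asymmetric_manifold} -- a product of a hyperbolic manifold with a connected sum involving an $E_8$-plumbing -- the group $\Torr(M)$ is large and $H_k(B\Torr(M);\Q)$ is not under any control; there is no reason for it to be one-dimensional, and no reason for $\overline{\phi}_*$ to preserve the line spanned by $c$. A single functional $\tau_{2q}$ pairing the same on $c$ and $\overline{\phi}_*(c)$ does not force them to be equal when the ambient space may be high-dimensional. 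So the argument as written does not prove the lemma.

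The paper's proof is much more elementary and sidesteps torsion entirely: the class $c$ factors through $B\Diff_{x_0}(M,M-D)$, the classifying space of diffeomorphisms supported in a small disc $D$ around $x_0$ (because the fiberwise connected sum only modifies the bundle inside $D$). Any $\phi\in\Diff(M)$ is isotopic to a diffeomorphism $\phi'$ that is the identity on $D$; conjugation by $\phi'$ is then literally the identity on the subgroup $\Diff_{x_0}(M,M-D)$ (disjoint supports commute), so $\overline{\phi}$ restricted to the subgroup through which $c$ factors is homotopic to the identity, giving $\overline{\phi}_*(c)=c$ on the nose. You should look for this kind of support/locality argument rather than trying to pin $c$ down cohomologically. (As an aside: for the \emph{downstream} application one could possibly salvage your idea by pairing the \emph{averaged} class $\frac{1}{|A_H(M)|}\sum_\psi\overline{\psi}_*(c)$ with $\tau_{2q}$ -- each summand pairs to the same nonzero number, so the average is nonzero and $A_H(M)$-invariant -- but this proves a weaker statement than the lemma claims and still requires care that $\tau_{2q}$ is preserved by every conjugation.)
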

\begin{proof} The map $\overline{\phi}$ is induced my the group
  homomorphism $\Torr(M) \to \Torr(M)$ given by conjugation with
  $\phi$. By construction, $E$ is classified by a map $S^k \to B
  \Diff_{x_0}(M, M -D)$ where $D \subset M$ is a small embedded disc
  around the base point $x_0 \in M$. Note that $\Diff_{x_0}(M, M-D)$
  can be regarded as a subgroup of $\Torr(M)$. The map $\phi$ is
  isotopic to a diffeomorphism fixing $D$.  Conjugation by this
  element induces the identity homomorphism on the subgroup
  $\Diff_{x_0}(M, M- D) \subset \Torr(M)$.
\end{proof}
We conclude that the finite group $A_H(M) = \Diff(M) / \Torr(M)$ acts
freely on the space $E \Diff(M) / \Torr(M) = B \Torr(M)$ with
  quotient $E\Diff(M)/\Diff(M)=B\Diff(M)$ and fixes $c
\in H_*(B \Torr(M) ; \Q)$. A transfer argument implies that $c$ is
mapped to a nonzero class in $H_*(B \Diff(M) ; \Q)$ under the
canonical map $B\Torr(M) \to B\Diff(M)$.

Theorem \ref{thm:usual_moduli} now follows from the observation that
this class lies in the image of the Hurewicz map, from the
commutativity of the diagram
\begin{small}
\[
\xymatrix{
  \Riem^+(M) / G  \ar[r] \ar[d] & \Riem(M) / G  \ar[d]  & \Riem(M)  \!\times_G E \Diff(M)\! = \!BG \ar[d] \ar@{=}[l] \\
  \Riem^+(M) / \Torr(M)\ \ar[r] \ar[d] & \Riem(M) / \Torr(M) \ar[d]   & \Riem(M) \!\times_{\Torr(M)} \!E \Diff(M)\! = \!B \Torr(M) \ar[d] \ar[l]\\
  \Riem^+(M) / \Diff(M) \ar[r] & \Riem(M) / \Diff(M) & \Riem(M)
  \!\times_{\Diff(M)} \!E \Diff(M)\! = \!B \Diff(M)
  \ar[l]_{\hspace*{-1.8cm}\mu} }
\]
\end{small}

\noindent
and from the following lemma. 

\begin{lemma} Assume that $\gamma \in \pi_k(B \Diff(M))$ is
  not in the kernel of the Hurewicz map 
$$
\pi_k(B \Diff(M))\to H_k(B \Diff(M) ; \Q).
$$
Then the  canonical map 
\[
  \mu : B \Diff(M) = \Riem(M) \times_{\Diff(M)}  E \Diff(M)  \to \Riem(M) / \Diff(M) \, , 
\] 
sends $\gamma$ to a non-zero element in $\pi_k(\Riem(M) / \Diff(M))$. 
\end{lemma}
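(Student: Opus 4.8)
The plan is to exhibit the map $\mu$ as the base-space map of a fibration whose fiber is the homotopy quotient of the contractible space $\Riem(M)$, and then trace a homology class through the resulting homotopy/homology long exact sequences. First I would recall that since $\Riem(M)$ is contractible, the Borel construction $\Riem(M)\times_{\Diff(M)}E\Diff(M)$ is a model for $B\Diff(M)$, so the source of $\mu$ genuinely has the homotopy type asserted. The target $\Riem(M)/\Diff(M)$ is the honest orbit space; since the $\Diff(M)$-action on $\Riem(M)$ is proper but \emph{not} free (finite isotropy can occur after averaging), $\mu$ need not be a homotopy equivalence. Nonetheless $\mu$ is rationally injective on homology in the range we care about, and this is what I want to extract.

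The key mechanism is a transfer / finite-group argument combined with a comparison of Borel constructions. Concretely: given $\gamma\in\pi_k(B\Diff(M))$ with nonzero Hurewicz image $h(\gamma)\in H_k(B\Diff(M);\Q)$, I want $\mu_*h(\gamma)\neq 0$ in $H_k(\Riem(M)/\Diff(M);\Q)$, since naturality of Hurewicz then forces $\mu_*\gamma\neq 0$ in $\pi_k$. To see $\mu_*$ is injective on rational homology, I would use that for a proper action of a Lie group with finite stabilizers, the orbit-space projection from the Borel construction induces a rational homology isomorphism: locally, near an orbit $G/H\cdot g$ with $H$ finite, the Borel construction looks like $EG\times_G(G\times_H S)=ES\times_H S$ (slice $S$), whose rational homology is $H_*(S/H;\Q)$ by the transfer for the finite group $H$, matching the local structure of the quotient $S/H$. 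Patching these local rational-homology isomorphisms via a Mayer--Vietoris / spectral-sequence argument over the orbit space gives that $\mu_*\colon H_*(B\Diff(M);\Q)\to H_*(\Riem(M)/\Diff(M);\Q)$ is an isomorphism; in particular it is injective, so $\mu_* h(\gamma)\neq 0$.

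Putting the pieces together: $\mu_*h(\gamma)\neq 0$ in $H_k(\Riem(M)/\Diff(M);\Q)$, and by the commuting square relating the Hurewicz maps of source and target of $\mu$, the element $\mu_*\gamma\in\pi_k(\Riem(M)/\Diff(M))$ has nonzero Hurewicz image and hence is itself nonzero. I expect the main obstacle to be the claim that the orbit map out of the Borel construction is a rational homology equivalence for a proper action with finite stabilizers — one must handle the singular strata of $\Riem(M)/\Diff(M)$ carefully (the action has higher-dimensional but still finite isotropy groups varying over the space), and assemble the local transfer isomorphisms into a global statement; Ebin's slice theorem for $\Riem(M)$ is the tool that makes this tractable, reducing the global question to the local model $ES\times_H S\to S/H$ for finite $H$, where the classical transfer argument applies verbatim. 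Everything else — contractibility of $\Riem(M)$, the identification of $B\Diff(M)$, naturality of Hurewicz — is formal.
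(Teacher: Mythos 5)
Your proposal follows essentially the same route as the paper: Ebin's slice theorem plus finiteness of the isotropy groups $\Diff(M)_g$ (Myers--Steenrod together with the hypothesis that $M$ carries no effective $S^1$-action) reduces the comparison of $B\Diff(M)=\Riem(M)\times_{\Diff(M)}E\Diff(M)$ with $\Riem(M)/\Diff(M)$ to a fiberwise transfer argument for the finite groups $\Diff(M)_g$, which is then globalized by a spectral sequence, and the Hurewicz/fundamental-class argument finishes. The one place where the paper is more careful and which you should not gloss over is the globalization step: rather than a Mayer--Vietoris patching in singular homology (which is awkward for the orbit space $\Riem(M)/\Diff(M)$), the paper applies the Leray spectral sequence for $\mu$ in \emph{sheaf} cohomology, whose Leray sheaf is constant $\Q$ precisely because the fibers are $B\Diff(M)_g$ with vanishing reduced rational cohomology by transfer. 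The paper also sidesteps invoking a Hurewicz map on $\Riem(M)/\Diff(M)$ in the final step: instead it identifies sheaf and singular cohomology only on the locally contractible space $B\Diff(M)$ (using that it is a paracompact Fr\'echet manifold) and on $S^k$, and then pairs a sheaf cohomology class pulled back from the orbit space against the singular fundamental class of $S^k$ to conclude the composite $S^k\to B\Diff(M)\to\Riem(M)/\Diff(M)$ is not nullhomotopic; this is logically equivalent to your Hurewicz-naturality argument but avoids any regularity assumptions on the orbit space.
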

\begin{proof} For $[g] \in \Riem(M) / \Diff(M)$ the preimage
  $\mu^{-1}([g]) = (g \cdot \Diff(M)) \times_{\Diff(M)} E \Diff(M)$ is
  homeomorphic to $B(\Diff(M)_g)$, where $\Diff(M)_g$ is the isotropy
  group of $g \in \Riem(M)$. Furthermore, by the existence of a local
  slice through $g$ for the action of $\Diff(M)$ on $\Riem(M)$, which
  can be assumed to be $\Diff(M)_g$-linear, see for example
  \cite[Section II.13.]{Bourg}, each neighborhood of $[g] \in
  \Riem(M) / \Diff(M)$ contains an open neighborhood $U$ so that
  $\mu^{-1}(U)$ retracts to $\mu^{-1}([g])$. In particular, the Leray
  sheaf $\mathcal{H}^*(\mu)$ for $\mu$, cf. \cite[IV.4]{Bredon}, is
  constant and equal to $\Q$ in degree $0$ and equal to $0$ in all
  other degrees. Here we use the Myers-Steenrod theorem \cite{Myers}
  which says that $\Diff(M)_g$ is a compact Lie group and hence finite
  as $S^1$ can act only trivially on $M$. This implies that the
  reduced sheaf theoretic cohomology
  $\widetilde{H}_{sh}^*(B\Diff(M)_g;\Q) = 0$ for all $g \in \Riem(M)$
  by the usual transfer argument \cite[II.19.]{Bredon} for sheaf
  theoretic cohomology.

  We conclude that the cohomological Leray spectral sequence (see
  e.g. \cite[IV.6]{Bredon})
\[
   E_2^{p,q} = H_{sh}^p(\Riem(M)/\Diff(M) ; 
\mathcal{H}^q(\mu)) \Rightarrow H_{sh}^{p+q}(B \Diff(M) ; \Q) 
\]
collapses at the $E_2$-level. From this it follows that the map $\mu$
induces an isomorphism in sheaf theoretic cohomology with rational
coefficients.

In order to derive the statement of the lemma, note that up to
homotopy equivalence the space $B \Diff(M)$ can be assumed to be a
paracompact Fr\'echet manifold \cite[Section 44.21]{Kr-Mich}, in
particular to be locally contractible.  This and the homotopy
invariance of sheaf theoretic cohomology \cite[Theorem
II.11.12]{Bredon} imply by \cite[Theorem III.1.1.]{Bredon} that there
is a canonical isomorphism
\[
  H_{sh}^*(B \Diff(M) ; \Q) \cong H_{sing}^*(B \Diff(M) ; \Q) 
\]
of sheaf theoretic and singular cohomology. 

Let $\gamma$ be represented by a map $S^k \to B \Diff(M)$ and consider
the composition
\[
   S^k \to B \Diff(M) \to \Riem(M) / \Diff(M) \, . 
\]
We have shown above that there is a class in $H_{sh}^k(\Riem(M)/
\Diff(M) ; \Q)$ whose pull-back under this composition evaluates
non-zero on the singular fundamental class of $S^k$ (after identifying
$H_{sh}^k(S^k;\Q) = H_{sing}^k(S^k;\Q)$). This implies that this
composition cannot be homotopic to a constant map.
\end{proof} 
It remains to construct the manifold $M$ in Lemma \ref{manifold}.

Let $n \geq 3$ be a natural number.  According to Mostow rigidity the
isometry group of a closed hyperbolic $n$-manifold $M$ is isomorphic
to the outer automorphism group $\Out(\pi_1(M))$. In \cite[Theorem
1.1.]{BL} a closed hyperbolic $n$-manifold $M^n$ with trivial isometry
group is constructed. In the notation of {\it loc. cit.}, $M^n$ is
defined as a quotient $\mathbb{H}^n / B$ of hyperbolic $n$-space by a
discrete subgroup of $\Isom(\mathbb{H}^n)$ which, according to Section
2.3. and Remark 6.3. in {\it loc. cit.}, can be assumed to consist
only of orientation preserving isometries of $\mathbb{H}^n$. In
particular, we can assume that $M^n$ is orientable. Summarizing, we
have
\begin{lemma} For each $n \geq 3$, there is an orientable closed
  hyperbolic (hence aspherical) $n$-manifold $B^n$ so that
  $\Out(\pi_1(B^n)) = 1$.
\end{lemma}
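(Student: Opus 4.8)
The plan is to obtain the lemma as a direct consequence of the construction of Belolipetsky and Lubotzky \cite{BL} together with Mostow rigidity, so that essentially no new argument is needed. First I would recall that, for $n \geq 3$, Mostow rigidity shows that every homotopy equivalence between closed hyperbolic $n$-manifolds is homotopic to an isometry; applied to self-maps this gives a natural isomorphism $\Isom(M) \cong \Out(\pi_1(M))$ for any closed hyperbolic $n$-manifold $M$ (here one uses that $\pi_1(M)$ is centerless, being a non-elementary hyperbolic group). Hence it suffices to exhibit, for each $n \geq 3$, a closed and orientable hyperbolic $n$-manifold whose full isometry group is trivial.

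Such a manifold is supplied by \cite[Theorem 1.1]{BL}, which constructs for each $n \geq 3$ a closed hyperbolic $n$-manifold with trivial isometry group (in fact one can prescribe an arbitrary finite group as the isometry group). This produces a manifold of the form $\mathbb{H}^n / \Lambda$, where $\Lambda$ — the group denoted $B$ in loc.\ cit.\ — is a torsion-free, discrete, cocompact subgroup of $\Isom(\mathbb{H}^n)$; torsion-freeness is what makes the quotient a manifold rather than an orbifold, and cocompactness is what makes it closed, both being built into the construction. For orientability I would read the relevant property off the construction itself: as explained in Section 2.3 and Remark 6.3 of \cite{BL}, the group $\Lambda$ may be taken to consist only of orientation-preserving isometries of $\mathbb{H}^n$, so that $\mathbb{H}^n/\Lambda$ is orientable.

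The one point that deserves a moment's thought — and the main (though mild) obstacle — is precisely this simultaneous arrangement of triviality of $\Isom$ and of orientability: one cannot first build a non-orientable example with trivial isometry group and then pass to its orientation double cover, since the nontrivial deck transformation of that cover would be an isometry, contradicting triviality; thus orientability genuinely has to be extracted from the construction in \cite{BL} rather than added afterward. Everything else is formal. Asphericity is automatic, since $\mathbb{H}^n$ is contractible and $\Lambda$ acts freely and properly discontinuously, so $\mathbb{H}^n/\Lambda$ is a $K(\Lambda,1)$. Setting $B^n := \mathbb{H}^n/\Lambda$, we get a closed orientable aspherical hyperbolic $n$-manifold with $\pi_1(B^n) \cong \Lambda$ and $\Out(\pi_1(B^n)) \cong \Isom(B^n) = 1$, which is the assertion of the lemma.
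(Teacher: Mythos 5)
Your proposal is correct and follows essentially the same route as the paper: invoke Mostow rigidity to identify $\Out(\pi_1)$ with the isometry group, cite Belolipetsky--Lubotzky \cite[Theorem 1.1]{BL} for a closed hyperbolic $n$-manifold with trivial isometry group, and extract orientability from Section~2.3 and Remark~6.3 of \emph{loc.\ cit.} The additional remarks you make (centerlessness needed for Mostow, why one cannot just pass to the orientation double cover, asphericity being automatic) are sound but supplementary to what the paper records.
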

Next, let $k \geq 2$ be a natural number. We construct an orientable
$4k$-dimensional manifold $N$ as follows.

Recall the Moore space $M(\mathbb{Z}/2,2) = S^2 \cup_{\phi} D^3$ where
$\phi: \partial D^3 \to S^2$ is of degree $2$.  Its reduced integral
homology is concentrated in degree $2$ and isomorphic to $\Z/2$. Let
$S^2 \to B \SO(3k)$ represent a generator of $\pi_2(B \SO(3k)) =
\Z/2$. This map can be extended to a map $M (\mathbb{Z}/2,2) \to B
\SO(3k)$ which then induces an isomorphism $H^2(B\SO(3k);\Z/2) \cong
H^2(M(\mathbb{Z}/2,2); \Z/2)$ of groups that are isomorphic to
$\Z/2$. By pulling back the universal bundle over $B \SO(3k)$ we
obtain a Euclidean vector bundle $X \to M(\Z/2, 2)$ of rank $3k$ which
is orientable, but not spin. At this point we note that the generator
of $H^2(B \SO(3k);\Z/2)$ is the second Stiefel-Whitney class of the
universal bundle over $B \SO(3k)$.

In this discussion we can replace $M(\mathbb{Z}/2,2)$ by a homotopy
equivalent finite $3$-dimensional simplicial complex, which we denote
by the same symbol.  If $k$ is chosen large enough then $M(\Z/2,2)$
can be embedded as subcomplex in $\R^{k+1}$. We
consider a regular neighborhood $R \subset \R^{k+1}$ of this
subcomplex. This is an compact oriented submanifold of $\R^{k+1}$ with
boundary which contains $M(\Z/2, 2)$ as a deformation retract.  By
definition $\partial R$ is an oriented closed smooth manifold of
dimension $k$. Furthermore, because $R$ has the rational homology of a
point, Poincar\'e duality and the long exact homology sequence for the
pair $(R,\partial R)$ show that $\partial R$ is a rational homology
sphere.  Let $E \to \partial R $ be the restriction of the pull back
over $R$ of the vector bundle $X \to M(\Z/2 ,2)$. If $k$ is chosen
large enough, then $H^{2}(R;\Z/2) \to H^2(\partial R;\Z/2)$ is an
isomorphism and hence $E$ is not spin.

Let $DE$ be the disc bundle of $E$ and let $P$ be the oriented double
of $DE$. The manifold $P$ is the total space of an oriented $S^{3k}$
bundle over $\partial R$ with vanishing Euler class (the latter for
dimension reasons).  Hence the rational homology of $P$ is
concentrated in degrees $0$, $k$, $3k$ and $4k$ and isomorphic to $\Q$
in these degrees.  Furthermore, the manifold $P$ is orientable, but
not spin. The latter holds, because the tangent bundle of $DE$
restricted to $\partial R$ splits as a direct sum $T(\partial R)
\oplus E$ and the bundle $T(\partial R)$ is stably trivial, since it
becomes trivial after adding a trivial real bundle of
rank $1$.

Because $P$ is simply connected by construction, 
the Hurewicz theorem modulo the Serre class of finite abelian groups shows
that $P$ has finite homotopy groups up to degree $k-1$.

If we additionally assume that $k$ is odd, then the only possibly
non-zero Pontrijagin class of $P$ lives in degree $4k$, hence the
$\hat{A}$-genus of $P$ is a multiple of the signature of $P$ and thus
equal to $0$.

There is a $4k$-dimensional oriented closed smooth manifold $Q$, given
by a Milnor $E_8$-plumbing construction \cite{KMil}, which is
$(2k-1)$-connected and whose intersection form on $H^{2k}(Q;\Z)$ is a
direct sum of copies of the positive definite lattice $E_8$, hence
itself a positive definite lattice. In particular, the
  signature of $M$ is non-zero. The first non-zero Pontrijagin class
is $p_k(Q) \in H^{4k}(Q; \Z)$, which is non-zero by
  the signature theorem. In particular the $\hat{A}$-genus of $Q$ is
nonzero.

For later use we recall that positive definite lattices have finite
automorphism groups: Given such a lattice $E$ choose a bounded ball
$D$ around $0$ which contains a set of generators. Because $E$ is
finitely generated and positive definite, $D$ is finite. Now observe
that each automorphism $E$ permutes the points in $D$ and is uniquely
determined by this permutation.

We finally define the oriented manifold $N^{4k} := P \sharp Q$ as the
connected sum of $P$ and $Q$.
\begin{lemma} For each odd $n > 0$ and each (sufficiently large and
  odd) $k > n$, the manifold $M := B^{n} \times N^{4k} $ has all the
  properties described in Proposition \ref{manifold}.
\end{lemma}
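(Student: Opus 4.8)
The plan is to verify, for $M=B^n\times N^{4k}$ (with $k$ odd, $k\ge 2$, and $k>n$), the five properties of Lemma~\ref{manifold} one at a time. Properties (1) and (2) are quick. Since $n$ is odd and $4k$ is even, $\dim M=n+4k$ is odd, and it is at least $N$ once $k$ is large; a product of orientable manifolds is orientable. For (2), note that $N=P\#Q$ is simply connected (both $P$ and $Q$ are), is not spin (because $P$ is not), and has dimension $4k\ge 8$; hence by the Gromov--Lawson theorem~\cite{GL1} it carries a psc-metric $g_N$. On $M$ one then uses the scaled product metric $g_B\oplus\lambda^2 g_N$ with $g_B$ the hyperbolic metric: its scalar curvature is $\scal_{g_B}+\lambda^{-2}\scal_{g_N}$, which is positive for $\lambda$ small enough since $B$ is compact and $\scal_{g_N}$ has a positive lower bound.

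For (4) and (5) I would argue with the rational cohomology ring together with the intersection form of $N$. By K\"unneth, $H^*(M;\Q)\cong H^*(B;\Q)\otimes H^*(N;\Q)$; by construction $H^*(N;\Q)$ is one--dimensional in degrees $0,k,3k,4k$, while $H^{2k}(N;\Q)=H^{2k}(Q;\Q)$ carries a positive definite intersection form (a sum of copies of $E_8$). Since $k>n=\dim B$, none of the degrees $k,2k,3k,4k$ receives a contribution from a positive--degree class of $B$, so $H^k(M;\Q)$, $H^{3k}(M;\Q)$, $H^{4k}(M;\Q)$ are one--dimensional and $H^{2k}(M;\Q)\cong H^{2k}(N;\Q)$, compatibly with the integral lattices and the intersection form. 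Now let $\phi\in\Diff(M)$. Because $N$ is simply connected, $\pi_1(M)=\pi_1(B)$, and $\phi_*$ is an inner automorphism of $\pi_1(B)$ since $\Out(\pi_1 B)=1$; hence $\pi_B\circ\phi\simeq\pi_B$ and $\phi^*$ fixes the subalgebra $\pi_B^*H^*(B;\Q)$ pointwise. As a graded ring automorphism, $\phi^*$ is then linear over this subalgebra, so it acts on the one--dimensional groups $H^k(M;\Q)$ and $H^{3k}(M;\Q)$ by scalars $\lambda,\mu\in\{\pm1\}$ (integrality) and on $H^{2k}(M;\Q)$ by a lattice automorphism $\Phi$. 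Multiplicativity into $H^{4k}(M;\Q)$ gives $\langle\Phi x,\Phi y\rangle=\lambda\mu\,\langle x,y\rangle$; taking $x=y$ and using positive--definiteness forces $\lambda\mu=1$. Since $[M]^\vee=\pi_B^*[B]^\vee\cup\pi_N^*[N]^\vee$ and $\phi^*$ multiplies $\pi_N^*[N]^\vee$ by $\lambda\mu=1$, $\phi$ preserves orientation; this is (5). For (4), the ring $H^*(N;\Q)$ is generated in degrees $k,2k,3k$, so $\phi^*$ is determined by $(\lambda,\mu)\in\{\pm1\}^2$ and $\Phi$, and $\Phi$ lies in the automorphism group of a positive definite lattice, which is finite; hence $A_H(M)$ is finite.

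Property (3) is the hard part. Given a smooth $S^1$-action on $M$, average a Riemannian metric to get an invariant one, so $S^1\subseteq\Isom(M,g)=:G$, a compact Lie group; if the action is nontrivial then $G$ is positive--dimensional and its identity component $G_0$ is a nontrivial compact connected Lie group acting effectively and isometrically on $M$. Since $G_0$ is connected and $\pi_1(B)$ is centerless, $G_0$ acts trivially on $\pi_1(M)=\pi_1(B)$, so the action lifts to an effective isometric action of a cover of $G_0$ on $\widetilde M=\hyperbolic^n\times N$ commuting with the deck group $\pi_1(B)$, which acts by hyperbolic isometries on the first factor. The plan is to exploit the rigidity of the hyperbolic factor: $\widetilde M$ is quasi--isometric to $\hyperbolic^n$ (as $N$ is compact), so every isometry of $(\widetilde M,g)$ induces a homeomorphism of the boundary sphere $\partial\hyperbolic^n=S^{n-1}$, and the cocompact action of the non-elementary group $\pi_1(B)$ on $S^{n-1}$ has trivial centralizer in $\mathrm{Homeo}(S^{n-1})$; this should confine $G_0$ to acting `vertically', after which one rules out the residual action using the construction of $N$, in particular that $Q$ is spin with $\hat A(Q)\neq 0$ and hence admits no nontrivial $S^1$-action by the Atiyah--Hirzebruch theorem, together with the control one has over $P$. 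Making this reduction precise --- passing from the mere existence of an invariant metric on $\widetilde M$, which need not split as a product, to genuine rigidity of the $G_0$-action --- is the step I expect to be hardest; I would approach it through the Conner--Raymond structure theory for compact transformation groups rather than by direct curvature arguments.
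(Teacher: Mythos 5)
Your verification of (1) and (2) matches the paper, and your arguments for (4) and (5) via the K\"unneth decomposition, the ring structure of $H^*(M;\Q)$, the positive definiteness of the lattice $H^{2k}(Q;\Z)$, and the triviality of $\Out(\pi_1 B)$ are essentially a repackaging of the paper's argument (the paper passes to the universal cover $\widetilde M$ to isolate $H^*(N;\Q)$; you extract the same information directly from the K\"unneth splitting, using $k>n$). Those parts are fine.

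The genuine gap is in (3), and it is two-fold. First, as you concede, the reduction from ``$G_0$ fixes the boundary sphere $\partial\hyperbolic^n$ pointwise'' to ``$G_0$ acts fibrewise on the $N$-factors'' is not carried out; the averaged invariant metric on $\widetilde M$ has no reason to be a Riemannian product, and upgrading the coarse (quasi-isometric) rigidity of $\hyperbolic^n$ to a genuine product splitting respected by $G_0$ would require substantial extra input (de Rham decomposition after proving the metric is reducible, or Conner--Raymond theory, etc.). Second, and more seriously, even if you did obtain a fibrewise $S^1$-action on $N$, the endgame you sketch does not close: you invoke Atiyah--Hirzebruch for $Q$, but the action is on $N=P\sharp Q$, which is \emph{not} spin (only $Q$ is, while $P$ was deliberately built to kill the spin condition), and an $S^1$-action on a connected sum does not induce one on either summand. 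So Atiyah--Hirzebruch is inapplicable to $N$, and ``the control one has over $P$'' is not enough to rescue this. The paper avoids both problems by applying a single black box directly to $M$: the theorem of Herrera and Herrera \cite{HerHer} on non-spin manifolds, which rules out effective $S^1$-actions on an oriented closed manifold with finite $\pi_2$ and $\pi_4$, a non-vanishing higher $\hat A$-genus $\langle \hat{\mathcal A}(M)\cup \phi^*(c),[M]\rangle$ coming from a class $c\in H^*(B\pi_1(M);\Q)$, and trivial image for every homomorphism $\Z\to\cent(\pi_1(M))$. The whole point of building $N$ from $P$ (to kill $w_2$ and keep $\pi_2,\pi_4$ finite) and $Q$ (to make $\hat A\neq 0$), and of choosing $B$ hyperbolic so that $\pi_1(B)$ is torsion-free with no $\Z^2$ subgroup, is precisely to satisfy the hypotheses of this theorem. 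Without that reference, property (3) is not established by your argument.
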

\begin{proof} The dimension of $M$ is odd and can be chosen arbitrarily large. 

  The manifold $N$ is simply connected, of dimension at least $5$ (if
  $k$ is large enough) and not spin. It therefore carries a metric of
  positive scalar curvature \cite{GL1} and the same is then true for
  the product $B^{n} \times N^{4k}$.

  Because $B$ is aspherical and $N$ is simply connected, we can regard
  the projection $p_1: M = B \times N \to B$ onto the first factor as
  the classifying map of the universal cover of $M$.  By construction,
  the manifold $M$ has finite $\pi_2$ and $\pi_4$ and the higher
  $\hat{A}$-genus $\langle \mathcal{A}(M) \cup \phi^*(c) , [M]
  \rangle$ associated to the fundamental class $c \in H^{n}(B;\Q) =
  H^{n}(B\pi_1(M); \Q)$ is nonzero. Because the group $\pi_1(B)$ is
  the fundamental group of a hyperbolic manifold, it is torsion free
  and does not contain $\Z^2$ as a subgroup (the latter by Preissman's
  theorem).  This implies that the image of any homomorphism $\Z \to
  \cent(\pi_1(M))$ is trivial. We can therefore apply \cite[Theorem
  4.1.]{HerHer} to conclude that $M$ does not carry any effective
  $S^1$-action.

  We now show that $A_H \subset \Aut(H_*(M;\Q))$ is finite. Let $f : M
  \to M$ be a diffeomorphism. Up to homotopy we can assume that  $f$ 
  fixes a base point $x_0$ so that we get an
  induced automorphism $f_* : \pi_1(M,x_0) \to
  \pi_1(M,x_0)$ and together with the classifying map $p_1 : M \to B$
  a homotopy commutative diagram
\[
 \xymatrix{
   M  \ar[r] \ar[d]^{f}  & B \ar[d]^{Bf_*} \\
   M  \ar[r]           & B    
   }
\]
Because the automorphism $f_* : \pi_1(M) \to \pi_1(M) $
must be inner by our choice of $B$, the right hand vertical map
induces the identity in rational cohomology. The classifying map $\phi
: M \to B$ being an isomorphism in rational cohomology up to degree $n$
(because $k > n$), we see that $f^*$ preserves the subspace
$p_1^*(H^*(B;\Q)) \subset H^*(M;\Q)$ and acts as the identity on this
subspace.

Up to a homotopy equivalence $\widetilde{M} \to N$, the universal
cover $\pi : \widetilde{M} \to B \times N$ can be identified with the
inclusion $N = \{*\} \times N \hookrightarrow B \times N$ (i.e.~the
corresponding triangle diagram commutes). This holds because $B$ is
aspherical and $N$ is simply connected.  Hence, because the
inclusion $N \hookrightarrow B \times
N$ has a left inverse (take the projection $p_2 : B \times N \to N$),
the map $\pi$ identifies $H^*(\widetilde{M} ; \Q)$ with the subspace
$p_2^*(H^*(N;\Q)) \subset H^*(B \times N;\Q)$.  Since $f$ induces a
map $\widetilde{M} \to \widetilde{M}$ (albeit not a map $N \to N$),
$f^*$ preserves this subspace. The induced map on
$H^*(\widetilde{M};\Z)$ defines an automorphism of the lattice
$H^{2k}(\widetilde{M}; \Z) = H^{2k}(Q; \Z)$. Because this lattice is
positive definite, the map $f$ can induce only finitely many self maps
of $H^{2k}(\widetilde{M} ; \Q)$. The remaining nonzero rational
cohomology of $\widetilde{M}$ is concentrated in degrees $0$, $k$,
$3k$ and $4k$ where it is isomorphic to $\Q$. Hence $f^*$ can act only
by minus or plus the identity on these cohomology groups.

We conclude that $f^*$ preserves the subspaces $p_1^*(H^*(B^n;\Q))$
and $p_2^*(H^*(N;\Q))$ of the vector space $H^*(M;\Q)$ and can only act as
the identity on the first and in finitely many ways on the
second. Because $H^*(M ;\Q)$ is generated as a ring by these
subspaces, $f^*$ is determined by the action on these subspaces. This
shows that $A_H$ is indeed finite.

The preceding argument also shows that the induced action of $f$ on
$H^{4k}(\widetilde{M};\Q)$ must be the identity, since a generator of
this group can be chosen as the $k$-th Pontrijagin class of
$\widetilde{M}$ by the construction of $Q$.  This and the fact that
$f^*$ acts trivially on $H^n(M;\Q) = H^n(B^n;Q)$ (see above) imply
that $f$ must act in an orientation preserving fashion on the manifold
$M$.
\end{proof}
 

\begin{thebibliography}{99}

\bibitem{BL} M.~Belolipetsky, A.~Lubotzky, Finite groups and
  hyperbolic manifolds, Invent.~Math.~{\bf 162} (2005), no.~3,
  459--472.

\bibitem{Botvinnik-Akutagawa-1} K.~Akutagawa, B.~Botvinnik, The
  Relative Yamabe Invariant, Comm.~Anal.~Geom.~10 (2002) 925-954.
\bibitem{AB2} K.~Akutagawa, B.~Botvinnik, Manifolds of positive scalar
        curvature and conformal cobordism theory, Math.~Ann.~324
        (2002) 817-840.
\bibitem{Besse} A.~Besse, Einstein manifolds. Ergebnisse der
        Mathematik und ihrer Grenzgebiete, Springer-Verlag, Berlin,
        1987.
\bibitem{Bok} M.~B\"okstedt, The rational homotopy type of
        $\Omega {\rm Wh}\sp {{\rm Diff}}(\ast)$, Algebraic topology,
        Aarhus 1982 (Aarhus, 1982), 25--37, Lecture Notes in Math.,
        1051, Springer, Berlin, 1984.
\bibitem{Botvinnik-Gilkey} B.~Botvinnik, P.~Gilkey, The eta
        invariant and metrics of positive scalar curvature.
        Math.~Ann.~302 (1995), no.~3, 507--517.
\bibitem{Bourg} J.-P.~Bourguignon, Une stratification de
        l'espace des structues riemanniennes, Comp.~Math.~30 (1)
        (1975), 1-41.
\bibitem{Bredon} G.~E.~Bredon, Sheaf theory, Graduate texts in
        Mathematics 170, Springer-Verlag 1997.
\bibitem{Chern} V.~Chernysh, On the homotopy type of the space
       $\mathcal{R}^+(M)$, math.GT/0405235.
\bibitem{Ebin} D.~Ebin, The manifold of Riemannian metrics.  1970
       Global Analysis (Proc.~Sympos.~Pure Math., Vol.~XV, Berkeley,
       Calif., 1968) pp.~11--40 Amer.~Math.~Soc., Providence, R.I.
\bibitem{FH} F.~T.~Farrell, W.~C.~Hsiang, On the rational
       homotopy groups of the diffeomorphism groups of discs, spheres
       and aspherical manifolds.  Algebraic and geometric topology
       (Proc.~Sympos.~Pure Math., Stanford Univ., Stanford, Calif.,
       1976), Part 1, pp.~325--337, Proc.~Sympos.~Pure Math., XXXII,
       Amer.~Math.~Soc., Providence, R.I., 1978.
\bibitem{Ga} P.~Gajer, Riemannian metrics of positive scalar
       curvature on compact manifolds with boundary.  Ann.~Global
       Anal.~Geom.~5 (1987), no.~3, 179--191.
\bibitem{Goette} S.~Goette, Morse theory and higher torsion
       invariants I, math.DG/0111222.
\bibitem{GL1} M.~Gromov, H.~B.~Lawson, The classification of
        simply connected manifolds of positive scalar
        curvature. Ann.~of Math.~(2) 111 (1980), no.~3,
        423--434. 
\bibitem{GL3} M.~Gromov, H.~B.~Lawson, Positive scalar curvature
        and the Dirac operator on complete Riemannian manifolds.
        Inst.~Hautes \'Etudes Sci.~Publ.~Math.  No.~58 (1983), 83--196
        (1984). 
\bibitem{HerHer} H.~Herrera, R.~Herrera, Higher $\hat{A}$-genera
        on certain non-spin $S^1$-manifold, preprint (2009),
        arXiv:0909.4779 .
\bibitem{Hitchin} N.~Hitchin, Harmonic spinors, Advances in
        Math.~14 (1974), 1--55.
\bibitem{Igusa-book} K.~Igusa, Higher Franz-Reidemeister
        torsion. AMS/IP Studies in Advanced Mathematics, 31. American
        Mathematical Society, Providence, RI; International Press,
        Somerville, MA, 2002.
\bibitem{Igusa} K.~Igusa, Higher Complex Torsion and the Framing
        Principle, Mem.~Am.~Math.~Soc.~177 (3).
\bibitem{Igusa-new} K.~Igusa, Axioms for higher torsion
        invariants of smooth bundles, Journal of Topology 1 (2008)
        159�186.
\bibitem{KMil} J.~Milnor, M.~Kervaire, Bernoulli numbers,
        homotopy groups, and a theorem of Rohlin. 1960 Proc.~Internat.
        Congress Math.~1958 pp.~454--458, Cambridge Univ.~Press, New
        York.
\bibitem{Kr-Mich} A.~Kriegl, P.~Michor, The Convenient Setting
        of Global Analysis, Mathematical Surveys and Monographs
        vol.~53, AMS.
\bibitem{LM} H.~B.~Lawson, M.~Michelsohn, Spin
        geometry. Princeton Mathematical Series, 38. Princeton
        University Press, Princeton, NJ, 1989. xii+427 pp.
\bibitem{Leichtnam-Piazza} E.~Leichtnam, P.~Piazza, On higher
        eta-invariants and metrics of positive scalar curvature.
        $K$-Theory 24 (2001), no.~4, 341--359.
\bibitem{Myers} S.~B.~Myers, N.~Steenrod, The group of
        isometries of a Riemannian manifold, Ann.~Math.~40 (1939),
        400-456.
\bibitem{Piazza-Schick} P.~Piazza, T.~Schick, Bordism,
        rho-invariants and the Baum-Connes
        conjecture. J.~Noncommut.~Geom.~1 (2007), no.~1, 27--111.
\bibitem{Rosenberg(1983)} {J.~Rosenberg}, {{$C^*$-algebras,
            positive scalar curvature, and the Novikov
            conjecture}}. Publ.~Math.~IHES 58 (1983), 197--212
\bibitem{Sch} T.~Schick. A counterexample to the (unstable)
        Gromov-Lawson-Rosenberg conjecture.  Topology 37 (1998),
        no.~6, 1165--1168.
\bibitem{Stolz} S.~Stolz, Simply connected manifolds of positive
         scalar curvature.  Ann.~of Math.~(2) 136 (1992), no.~3, 511--540
\bibitem{Walsh} M.~Walsh, Metrics of positive scalar curvature
        and generalised Morse functions, part 1. math.DG:0811.1245.
\bibitem{Walsh2} M.~Walsh, Metrics of positive scalar curvature
        and generalised Morse functions, part 2. math.DG:0910.2114.
\end{thebibliography}
\end{document}